\newtheorem{Def}{Definition}[section]
\newtheorem{Th}[Def]{Theorem}
\newtheorem{Ex}[Def]{Example}
\newtheorem{Lemma}[Def]{Lemma}
\newtheorem{Prop}[Def]{Proposition}
\newtheorem{Cor}[Def]{Corollary}
\newtheorem{Rem}[Def]{Remark}
\newtheorem{Prob}[Def]{Problem}
\DeclareMathOperator{\fin}{Fin}
\DeclareMathOperator{\iwfs}{\mathcal{I}WFS}
\DeclareMathOperator{\iwfsd}{\mathcal{I}WFSD}
\begin{document}
\title[On a variation of selective separability using ideals]{On a variation of selective separability using ideals}

\author[ D. Chandra, N. Alam and D. Roy ]{ Debraj Chandra$^*$, Nur Alam$^\dag$ and Dipika Roy$^*$ }
\newcommand{\acr}{\newline\indent}
\address{\llap{*\,}Department of Mathematics, University of Gour Banga, Malda-732103, West Bengal, India}
\email{debrajchandra1986@gmail.com, roydipika1993@gmail.com}
\address{\llap{\dag\,}Department of Mathematics, Directorate of Open and Distance Learning (DODL), University of Kalyani, Kalyani, Nadia-741235, West Bengal, India}
\email{nurrejwana@gmail.com}

\subjclass{Primary: 54D65; Secondary: 54C35, 54D20, 54D99}

\maketitle

\begin{abstract}
A space $X$ is H-separable (Bella et al., 2009) if for every sequence $(Y_n)$ of dense subspaces of $X$ there exists a sequence $(F_n)$ such that for each $n$ $F_n$ is a finite subset of $Y_n$ and every nonempty open set of $X$ intersects $F_n$ for all but finitely many $n$. In this paper, we introduce and study an ideal variant of H-separability, called $\mathcal{I}$-H-separability.
\end{abstract}

\noindent{\bf\keywordsname{}:} {Selective separable, M-separable, H-separable, $\mathcal{I}$-H-separable.}

\section{Introduction}
In \cite{coc6}, Marion Scheepers introduced and studied the notion of selective separability (also called M-separability; see \cite{bella09}) as a generalization of separability, and the study was later continued in \cite{bella08,bella09,barman11}. It is worth noting that $C_p(X)$ and $2^\kappa$ played crucial roles in the study of selective separability.

A family $\mathcal{I}\subseteq 2^A$ of subsets of a nonempty set $A$ is said to be an ideal on $A$ if $(i)$ $B, C\in\mathcal{I}$ imply $B\cup C\in\mathcal{I}$ and $(ii)$ $B\in\mathcal{I}$ and $C\subseteq B$ imply $C\in\mathcal{I}$, while an admissible ideal $\mathcal{I}$ of $A$ further satisfies $\{a\}\in\mathcal{I}$ for each $a\in A$. Throughout the paper $\mathcal{I}$ stands for a proper admissible ideal on $\mathbb{N}$. The symbol $\fin$ denotes the ideals of all finite subsets of $\mathbb{N}$.

As a variant of selective separability, the notion of H-separability was introduced and studied by Bella et al. \cite{bella09}. Motivated by this, we generalize H-separability by introducing its ideal variant, \(\mathcal{I}\)-H-separability. We establish that if \(\mathcal{I}_1\) and \(\mathcal{I}_2\) are ideals on \(\mathbb{N}\) with \(\mathcal{I}_1 \le_{1\text{-}1} \mathcal{I}_2\), then \(\mathcal{I}_1\)-H-separability implies \(\mathcal{I}_2\)-H-separability. We show that \(\mathcal{I}\)-H-separability lies strictly between H-separability and M-separability. We further show that \(\mathcal{I}\)-H-separability and H-separability are equivalent for a certain class of ideals. The $\mathcal{I}$-H-separability possesses well-behaved categorical properties: it remains invariant under open continuous mappings, closed irreducible mappings, dense subspaces, and open subspaces. However, it does not remain invariant under continuous mappings (see Example~\ref{ex3}). It is observed that the product of two countable $\mathcal{I}$-H-separable spaces remains $\mathcal{I}$-H-separable provided that one of the factors has $\pi$-weight less than $\mathfrak{b}$. For a compact space we show that $\mathcal{I}$-H-separability is equivalent to $\pi$-weight of the space is countable.

The cardinal number $\mathfrak{b}(\mathcal{I})$ and the space $2^\kappa$ are used to obtain certain fascinating observations. We begin by showing that, under a certain assumption, a space with $\pi$-weight less than $\mathfrak{b} (\mathcal{I})$ is $\mathcal{I}$-H-separable. As a consequence of this result, we establish that every countable subspace of $2^\kappa$, where $\kappa < \mathfrak{b} (\mathcal{I})$, is $\mathcal{I}$-H-separable. Furthermore, we construct a countable dense non-$\mathcal{I}$-H-separable subspace of $2^{\mathfrak{b} (\mathcal{I})}$. This allows us to determine that the smallest $\pi$-weight of a countable non-$\mathcal{I}$-H-separable space is precisely $\mathfrak{b} (\mathcal{I})$. It is also demonstrated that $2^{\omega_1}$ contains a countable dense $\mathcal{I}$-H-separable subspace.

Finally, we consider the function space $C_p(X)$ to further our investigation in this direction. To this end, we introduce the notions of $\mathcal{I}$-weakly Fr\'{e}chet in the strict sense (abbreviated as $\iwfs$) and $\mathcal{I}$-weakly Fr\'{e}chet in the strict sense with respect to dense subspaces (abbreviated as $\iwfsd$). On $C_p(X)$, $\mathcal{I}$-H-separability is characterized by the $\iwfs$ and $\iwfsd$ properties, as well as by the condition that the $i$-weight of $X$ is countable and every finite power of $X$ is $\mathcal{I}$-Hurewicz. If $X$ is zero-dimensional, then we observe that $C_p(X)$ is $\mathcal{I}$-H-separable is equivalent to $C_p(X, 2)$, $C_p(X, \mathbb{Z})$, and $C_p(X, \mathbb{Q})$ are $\mathcal{I}$-H-separable. Additionally, we obtain several interesting results concerning $C_p(X)$. Our investigation also leaves some problems for future study.

\section{Preliminaries}
By a space we always mean a topological space. All spaces are assumed to be Tychonoff; otherwise will be mentioned. For undefined notions and terminologies see \cite{Engelking}.
$w(X)$ denotes the weight of a space $X$. A collection $\mathcal{B}$ of open sets of $X$ is called $\pi$-base if every nonempty open set in $X$ contains a nonempty member of $\mathcal{B}$. $\pi w(X) = \min \{|\mathcal{B}| : \mathcal{B} \text{ is a } \pi\text{-base for } X\}$ denotes the $\pi$-weight of $X$. $d(X)$ denotes the minimal cardinality of a dense subspace of $X$ and $\delta(X) = \sup \{d(Y) : Y \text{ is dense in }X\}$. For any space $X$, $d(X) \leq \delta(X) \leq \pi w(X) \leq w(X)$. The $i$-weight of a space $(X, \tau)$ is defined as \[iw((X, \tau)) = \min \{\kappa : \text{there is a Tychonoff topology } \tau^\prime \subseteq \tau \text{ such that } w((X, \tau^\prime)) = \kappa\}.\]

A space $X$ has countable fan tightness \cite{arhan92} if for any $x\in X$ and any
sequence $(Y_n)$ of subsets of $X$ with $x\in \cap_{n\in \mathbb{N}} \overline{Y_n}$ there exists a sequence $(F_n)$ such that for each $n$ $F_n$ is a finite subset of $Y_n$ and $x \in \overline{\cup_{n\in \mathbb{N}} F_n}$. $X$ has countable fan tightness with respect to dense subspaces \cite{bella09} if for any $x\in X$ and any sequence $(Y_n)$ of dense subspaces of $X$ there exists a sequence $(F_n)$ such that for each $n$ $F_n$ is a finite subset of $Y_n$ and $x \in \overline{\cup_{n\in \mathbb{N}} F_n}$. A space $X$ has countable tightness (which is denoted by $t(X) = \omega$) if for each $x\in X$ and each $Y \subseteq X$ with $x\in \overline{Y}$ there exists a countable set $E \subseteq Y$ such that $x\in \overline{E}$. A space $X$ is scattered if every nonempty subspace $Y$ of $X$ has an isolated point. For a space $X$, $\beta X$ denotes the Stone-\v{C}ech compactification of $X$.

Let $X$ be a space and $C(X)$ be the set of all continuous real valued functions. As usual $C_p(X)$ denotes the space $C(X)$ with pointwise convergence topology. Let $f \in C(X)$. Then a basic open set of $f$ in $C_p(X)$ is of the form \[B(f, F, \epsilon) = \{g\in C(X) : |f(x) - g(x)| < \epsilon\; \forall x\in F\},\] where $F$ is a finite subset of $X$ and $\epsilon > 0$. We use $\underbar{1}$ to denote the function which takes the constant value $1$ everywhere on $X$. For a separable metrizable space $X$, $C_p(X)$ is hereditarily separable.

A space $X$ is said to be M-separable \cite{bella08} (see also \cite{bella09}) if for every sequence $(Y_n)$ of dense subspaces of $X$ there exists a sequence $(F_n)$ such that for each $n$ $F_n$ is a finite subset of $Y_n$ and $\cup_{n\in \mathbb{N}} F_n$ is dense in $X$. $X$ is said to be H-separable \cite{bella09} if for every sequence $(Y_n)$ of dense subspaces of $X$ there exists a sequence $(F_n)$ such that for each $n$ $F_n$ is a finite subset of $Y_n$ and every nonempty open set of $X$ intersects $F_n$ for all but finitely many $n$.

A space $X$ is said to have the $\mathcal{I}$-Hurewicz property \cite{PD} (see also \cite{PD4}) if for each sequence $(\mathcal{U}_n)$ of open covers of $X$ there exists a sequence $(\mathcal{V}_n)$ such that for each $n$ $\mathcal{V}_n$ is a finite subset of $\mathcal{U}_n$ and for each $x\in X$, $\{n\in\mathbb{N} : x\notin \cup \mathcal{V}_n\} \in \mathcal{I}$.

\section{Main Results}
\subsection{$\mathcal{I}$-H-separability}
We now introduce the main definition of the paper.
\begin{Def}\rm
\label{def1}
A space $X$ is said to be $\mathcal{I}$-H-separable if for every sequence $(Y_n)$ of dense subspaces of $X$ there exists a sequence $(F_n)$ such that for each $n$ $F_n$ is a finite subset of $Y_n$ and for every nonempty open set $U$ of $X$, $\{n\in \mathbb{N} : U\cap F_n = \emptyset\} \in \mathcal{I}$.
\end{Def}

For a function $\varphi \in \mathbb{N}^\mathbb{N}$ and $\mathcal{A}_1, \mathcal{A}_2 \subseteq 2^\mathbb{N}$, $\mathcal{A}_1 \leq_\varphi \mathcal{A}_2$ implies that $\varphi^{-1}(A) \in \mathcal{A}_2$ for all $A\in \mathcal{A}_1$. $\mathcal{A}_1 \leq_{1\mbox{-}1} \mathcal{A}_2$ implies that there exists a one-one function $\varphi \in \mathbb{N}^\mathbb{N}$ such that $\mathcal{A}_1 \leq_\varphi \mathcal{A}_2$.

\begin{Th}
\label{thm8}
Let $\mathcal{I}_1$ and $\mathcal{I}_2$ be two ideals on $\mathbb{N}$ such that $\mathcal{I}_1 \leq_{1\mbox{-}1} \mathcal{I}_2$. If $X$ is $\mathcal{I}_1$-H-separable, then $X$ is $\mathcal{I}_2$-H-separable.
\end{Th}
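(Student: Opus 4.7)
The plan is to reduce the $\mathcal{I}_2$-H-separability of $X$ to an application of the $\mathcal{I}_1$-H-separability hypothesis, using the one-one function $\varphi$ witnessing $\mathcal{I}_1 \leq_{1\text{-}1} \mathcal{I}_2$ as an index-shifter. The subtlety is that $\varphi$ goes the ``wrong way'' for directly relabelling a given sequence $(Y_n)$ of dense subspaces, so I will first pass to an auxiliary sequence $(Z_n)$ indexed so that $\varphi$ lines up correctly.

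Concretely, fix a sequence $(Y_n)$ of dense subspaces of $X$ and let $\varphi \in \mathbb{N}^\mathbb{N}$ be one-one with $\varphi^{-1}(A) \in \mathcal{I}_2$ for every $A \in \mathcal{I}_1$. Define a new sequence of dense subspaces $(Z_n)_{n\in\mathbb{N}}$ by $Z_{\varphi(m)} := Y_m$ for each $m \in \mathbb{N}$ (this is unambiguous since $\varphi$ is one-one) and $Z_n := X$ for $n \notin \varphi(\mathbb{N})$. Each $Z_n$ is dense in $X$. Apply $\mathcal{I}_1$-H-separability to $(Z_n)$ to obtain a sequence $(G_n)$ with $G_n \subseteq Z_n$ finite and with $\{n \in \mathbb{N} : U \cap G_n = \emptyset\} \in \mathcal{I}_1$ for every nonempty open $U \subseteq X$.

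Now set $F_m := G_{\varphi(m)}$; since $G_{\varphi(m)} \subseteq Z_{\varphi(m)} = Y_m$, this gives a finite subset of $Y_m$, as required. For any nonempty open $U \subseteq X$, writing $A := \{n \in \mathbb{N} : U \cap G_n = \emptyset\} \in \mathcal{I}_1$, the key identity
\[
\{m \in \mathbb{N} : U \cap F_m = \emptyset\} = \{m \in \mathbb{N} : \varphi(m) \in A\} = \varphi^{-1}(A)
\]
together with the defining property of $\varphi$ yields $\{m : U \cap F_m = \emptyset\} \in \mathcal{I}_2$, which is exactly the $\mathcal{I}_2$-H-separability condition.

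No genuine obstacle arises here; the only point requiring care is the construction of $(Z_n)$, so that the indices in the range of $\varphi$ recover the original dense subspaces while the other indices are harmlessly padded (any dense subspace works, and $X$ itself is the simplest choice). Once the indexing is set up, the property $\mathcal{I}_1 \leq_\varphi \mathcal{I}_2$ transports the ``exceptional set'' directly.
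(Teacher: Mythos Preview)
Your proof is correct and follows essentially the same approach as the paper: both construct an auxiliary sequence $(Z_n)$ with $Z_{\varphi(m)}=Y_m$ (padding the remaining indices with a fixed dense set---$Y_1$ in the paper, $X$ in your version), apply $\mathcal{I}_1$-H-separability, and then pull back along $\varphi$. Your presentation is in fact slightly tidier, since you observe the equality $\{m:U\cap F_m=\emptyset\}=\varphi^{-1}(A)$ directly, whereas the paper records only the containment (which of course suffices).
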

\begin{proof}
Since $\mathcal{I}_1 \leq_{1\mbox{-}1} \mathcal{I}_2$, there exists a one-one function $\varphi \in \mathbb{N}^\mathbb{N}$ such that $\varphi^{-1}(A) \in \mathcal{I}_2$ for all $A\in \mathcal{I}_1$. Let $(Y_n)$ be a sequence of dense subspaces of $X$. We define a sequence $(Z_n)$ of dense subspaces of $X$ by \[Z_n = \begin{cases}
          Y_m, & \mbox{if } n = \varphi(m)\\
          Y_1, & \mbox{otherwise}.
        \end{cases}\]
Since $X$ is $\mathcal{I}_1$-H-separable, there exists a sequence $(F_n)$ such that for each $n$ $F_n$ is a finite subset of $Z_n$ and for every nonempty open set $U$ of $X$, $\{n\in \mathbb{N} : U\cap F_n = \emptyset\} \in \mathcal{I}_1$. It follows that for every nonempty open set $U$ of $X$, $\varphi^{-1}(\{n\in \mathbb{N} : U\cap F_n = \emptyset\}) \in \mathcal{I}_2$.

For each $n$ let $F_n^\prime = F_{\varphi(n)}$. Clearly for each $n$ $F_n^\prime$ is a finite subset of $Y_n$. We claim that the sequence $(F_n^\prime)$ witnesses that $X$ is $\mathcal{I}_2$-H-separable. Let $U$ be a nonempty open set of $X$. Then $\varphi^{-1}(\{n\in \mathbb{N} : U\cap F_n= \emptyset\}) \in \mathcal{I}_2$. We now show that $\{n\in \mathbb{N} : U\cap F_n^\prime= \emptyset\} \subseteq \varphi^{-1}(\{n\in \mathbb{N} : U\cap F_n= \emptyset\})$. Let $k\in \{n\in \mathbb{N} : U\cap F_n= \emptyset\}$. Then $U\cap F_k^\prime = \emptyset$, i.e. $U\cap F_{\varphi(k)} = \emptyset$. This gives us $\varphi(k) \in \{n\in \mathbb{N} : U\cap F_n= \emptyset\}$, i.e.  $k\in \varphi^{-1}(\{n\in \mathbb{N} : U\cap F_n= \emptyset\})$. Thus $\{n\in \mathbb{N} : U\cap F_n^\prime= \emptyset\} \subseteq \varphi^{-1}(\{n\in \mathbb{N} : U\cap F_n= \emptyset\})$. It follows that $\{n\in \mathbb{N} : U\cap F_n^\prime= \emptyset\} \in \mathcal{I}_2$ and hence $X$ is $\mathcal{I}_2$-H-separable.
\end{proof}

It is clear that every H-separable space is $\mathcal{I}$-H-separable and every $\mathcal{I}$-H-separable space is M-separable. However, the converses do not hold, as demonstrated in the next two examples (Examples~\labelcref{ex1,ex2}). To proceed, we first recall the following result of \cite{blass99} (see \cite[Theorem 2.1]{blass99}). There are models of ZFC in which $\mathfrak{b} \ll \mathfrak{d}$, and every regular cardinal $\kappa$ between them arises as $\kappa = \mathfrak{b} (\mathcal{I})$ for some maximal ideal $\mathcal{I}$.

\begin{Ex}
\label{ex2}
There exists an $\mathcal{I}$-H-separable space which is not H-separable.
\end{Ex}
\begin{proof}
Let $\mathcal{I}$ be a maximal ideal such that $\mathfrak{b} < \mathfrak{b} (\mathcal{I})$. By \cite[Theorem 31]{bella09}, there exists a countable subspace $Y$ of $2^\mathfrak{b}$ such that $Y$ is not H-separable. Since $Y$ is countable, $\delta(Y) = \omega$. Also $\pi w(Y) = \mathfrak{b} < \mathfrak{b} (\mathcal{I})$. By Theorem~\ref{thm1}, $Y$ is $\mathcal{I}$-H-separable.
\end{proof}

\begin{Ex}
\label{ex1}
There exists a M-separable space which is not $\mathcal{I}$-H-separable.
\end{Ex}
\begin{proof}
Let $\mathcal{I}$ be a maximal ideal such that $\mathfrak{b}(\mathcal{I}) < \mathfrak{d}$. By Theorem~\ref{thm2}, there exists a countable subspace $Y$ of $2^{\mathfrak{b} (\mathcal{I})}$ such that $Y$ is not $\mathcal{I}$-H-separable. Since $Y$ is countable, $\delta(Y) = \omega$. Also $\pi w(Y) = \mathfrak{b} (\mathcal{I}) < \mathfrak{d}$. By \cite[Theorem 11]{bella09} (see also \cite{bella08}), $Y$ is M-separable.
\end{proof}

The above two examples are consistent with ZFC. So it is natural to ask the following questions.

\begin{Prob}
\label{prob1}
In ZFC, does there exist an $\mathcal{I}$-H-separable space which is not H-separable?
\end{Prob}

\begin{Prob}
\label{prob2}
In ZFC, does there exist a M-separable space which is not $\mathcal{I}$-H-separable?
\end{Prob}

We say that a set $C$ is almost contained in a set $D$ if $C\setminus D$ is finite and we use $C\subseteq^* D$ to denote it. Let $\mathcal{C}$ be a family of subsets of $\mathbb{N}$. A subset $D$ of $\mathbb{N}$ is said to be a pseudounion of $\mathcal{C}$ if $\mathbb{N} \setminus D$ is infinite and $C \subseteq^* D$ for every $C \in \mathcal{C}$.

Clearly $\fin$-H-separable $=$ H-separable. We now show that $\mathcal{I}$-H-separability and H-separability are equivalent for certain class of ideals.

\begin{Th}
\label{thm10}
Let $\mathcal{I}$ be an ideal on $\mathbb{N}$ having a pseudounion. Then a space $X$ is H-separable if and only if it is $\mathcal{I}$-H-separable.
\end{Th}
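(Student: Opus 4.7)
The forward direction (H-separable $\Rightarrow$ $\mathcal{I}$-H-separable) is immediate because $\mathcal{I}$ is admissible, hence $\fin \subseteq \mathcal{I}$: any witness for H-separability automatically witnesses $\mathcal{I}$-H-separability. The content is in the reverse direction, and the key feature to exploit is that a pseudounion $D$ of $\mathcal{I}$ has infinite complement $M = \mathbb{N}\setminus D$, yet every $A\in\mathcal{I}$ satisfies $A\subseteq^* D$, which means $A\cap M$ is finite for every $A\in\mathcal{I}$.

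The plan is to embed the given sequence of dense subspaces along the coinfinite set $M$ and pad with arbitrary dense sets on $D$. Concretely, assume $X$ is $\mathcal{I}$-H-separable and let $(Y_n)$ be a sequence of dense subspaces of $X$. Write $M = \{m_1 < m_2 < \cdots\}$ and define a new sequence $(Z_n)$ of dense subspaces by
\[
Z_n = \begin{cases} Y_k & \text{if } n = m_k \in M,\\ X & \text{if } n \in D.\end{cases}
\]
Apply $\mathcal{I}$-H-separability to $(Z_n)$ to obtain finite sets $G_n \subseteq Z_n$ with $\{n : U\cap G_n = \emptyset\}\in\mathcal{I}$ for every nonempty open $U\subseteq X$. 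Then set $F_k = G_{m_k}$, so that $F_k$ is a finite subset of $Y_k$.

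To verify H-separability via $(F_k)$, fix a nonempty open $U\subseteq X$ and let $A = \{n\in\mathbb{N} : U\cap G_n = \emptyset\}\in\mathcal{I}$. Since $D$ is a pseudounion of $\mathcal{I}$, we have $A\subseteq^* D$, so $A\cap M = A\setminus D$ is finite. Because $k\mapsto m_k$ is a bijection $\mathbb{N}\to M$ and
\[
\{k : U\cap F_k = \emptyset\} = \{k : m_k \in A\},
\]
this set is in bijection with $A\cap M$, hence finite. That is exactly the H-separability condition for $U$.

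I do not foresee a real obstacle: the only point requiring care is to make sure the padding on $D$ still yields dense subspaces (trivially satisfied by taking $Z_n = X$ there), and to confirm that $m_k\in A$ iff $U\cap F_k = \emptyset$ unravels directly from the definition $F_k = G_{m_k}$. The pseudounion hypothesis is used exactly once, to convert membership in $\mathcal{I}$ into finiteness of the complement along $M$.
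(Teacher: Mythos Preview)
Your proof is correct and rests on the same key observation as the paper's: since $D$ is a pseudounion of $\mathcal{I}$, every $A\in\mathcal{I}$ meets $M=\mathbb{N}\setminus D$ in a finite set, so re-indexing the given sequence along $M$ converts the $\mathcal{I}$-conclusion into a $\fin$-conclusion.

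The execution, however, is cleaner than the paper's. The paper applies $\mathcal{I}$-H-separability \emph{twice}: first to $(Y_n)$ itself to obtain selectors $(F_n)$ that work cofinitely on $E=\mathbb{N}\setminus D$, and then again to a re-indexed sequence (via a bijection $f:D\to E$) to obtain selectors $(H_n)$ whose restriction to $E$ fills in the indices in $D$; the final witness splices these two together. You accomplish the same in a single pass by embedding $(Y_n)$ along $M$ and padding the $D$-indices with the trivially dense subspace $X$. This eliminates the bijection $f$ and the splicing step entirely, at no cost: the padding values are discarded anyway. Both arguments use the pseudounion hypothesis at exactly the same point, but yours is the more economical route.
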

\begin{proof}
For $\mathcal{I} = \fin$, the result follows immediately. Assume that $\mathcal{I} \neq \fin$. We only prove the non-trivial part. Suppose that $X$ is an $\mathcal{I}$-H-separable space. Let $D$ be a pseudounion of $\mathcal{I}$. Let $(Y_n)$ be a sequence of dense subspaces of $X$. Since $X$ is $\mathcal{I}$-H-separable, there exists a sequence $(F_n)$ such that for each $n$ $F_n$ is a finite subset of $Y_n$ and for every nonempty open set $U$ of $X$, \[A_U = \{n \in \mathbb{N} : U \cap F_n = \emptyset\} \in \mathcal{I}.\]

Let $E = \mathbb{N} \setminus D$. Since $D$ is a pseudounion of $\mathcal{I}$, $E$ is infinite and for every $C \in \mathcal{I}$, we have $C \subseteq^* D$, i.e. $C \setminus D$ finite. In particular, for every nonempty open $U$ of $X$, $A_U \cap E$ is finite. Thus, on $E$, the sequence $(F_n)$ already satisfies that for every nonempty open set $U$ of $X$, $\{n \in E : U \cap F_n = \emptyset\}$ is finite.

We now construct a new sequence $(G_n)$ that will work on all of $\mathbb{N}$. Since $D$ and $E$ are both infinite, there exists a bijection $f : D \to E$. Define a new sequence $(Z_n)$ of dense subspaces of $X$ as follows:
\[Z_n =
\begin{cases}
Y_n & \text{if } n \in D, \\
Y_{f^{-1}(n)} & \text{if } n \in E.
\end{cases}\]

Apply the $\mathcal{I}$-H-separability of $X$ to the sequence $(Z_n)$. This yields a sequence $(H_n)$ such that for each $n$ $H_n$ is a finite subset of $Z_n$ and for every nonempty open set $U$ of $X$, \[B_U = \{n \in \mathbb{N} : U \cap H_n = \emptyset\}\in \mathcal{I}.\] Again, since $D$ is a pseudounion of $\mathcal{I}$, $B_U \cap E$ is finite for every nonempty open set $U$ of $X$.

Now define the sequence $(G_n)$ by
\[G_n =
\begin{cases}
F_n & \text{if } n \in E, \\
H_{f(n)} & \text{if } n \in D.
\end{cases}\]
For each $n\in \mathbb{N}$, $G_n$ is a finite subset of $Y_n$ as $(i)$ if $n \in E$, then $G_n = F_n \subseteq Y_n$; $(ii)$ if $n \in D$, then $f(n) \in E$, and $G_n = H_{f(n)} \subseteq Z_{f(n)} = Y_{f^{-1}(f(n))} = Y_n$. We claim that the sequence $(G_n)$ witnesses for $(Y_n)$ that $X$ is H-separable.

Let $U$ be a nonempty open subset of $X$. We show that the set $C_U = \{n \in \mathbb{N} : U \cap G_n = \emptyset\}$ is finite. Clearly $C_U = (C_U \cap E) \cup (C_U \cap D)$. Now \[C_U \cap E = \{n \in E : U \cap G_n = \emptyset\} = \{n \in E : U \cap F_n = \emptyset\} = A_U \cap E,\] which is finite. Also
\begin{alignat*}{1}
C_U \cap D &= \{n \in D : U \cap G_n = \emptyset\} = \{n \in D : U \cap H_{f(n)} = \emptyset \} \\
        &= f^{-1}(\{ m \in E : U \cap H_m = \emptyset \}) = f^{-1}(B_U \cap E),
\end{alignat*}
which is finite, as $f$ is a bijection from $D$ onto $E$ and $B_U \cap E$ is finite. Thus $C_U$ is finite for every nonempty open set $U$ of $X$, and hence $X$ is H-separable.
\end{proof}

Next we present some fundamental observations in the context of $\mathcal{I}$-H-separable spaces. Recall that a continuous mapping $f: X \to Y$ is irreducible if the only closed subset $C$ of $X$ satisfying $f(C) = Y$ is $C = X$.
\begin{Lemma}
\label{lemma3}
The $\mathcal{I}$-H-separability is preserved under open continuous as well as closed irreducible mappings.
\end{Lemma}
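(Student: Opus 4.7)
The plan is to handle both cases by a single strategy: given a surjection $f \colon X \to Y$ of the relevant type and a sequence $(Y_n)$ of dense subspaces of $Y$, I would lift each $Y_n$ to a dense subspace $f^{-1}(Y_n)$ of $X$, apply the $\mathcal{I}$-H-separability of $X$ to this lifted sequence, and push the resulting finite witnesses forward by $f$. The only step that distinguishes the two hypotheses is verifying that $f^{-1}(Y_n)$ is dense in $X$.

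For the open continuous case, density of $f^{-1}(Y_n)$ follows immediately: for any nonempty open $U \subseteq X$, $f(U)$ is a nonempty open subset of $Y$, hence meets $Y_n$, so $U$ meets $f^{-1}(Y_n)$. For the closed irreducible case I would invoke the standard observation that for every nonempty open $U \subseteq X$ the set $V_U := Y \setminus f(X \setminus U)$ is nonempty open in $Y$ (closedness of $f$ gives openness, and irreducibility guarantees $V_U$ is nonempty since $f(X\setminus U) \neq Y$) and satisfies $f^{-1}(V_U) \subseteq U$. Picking $y \in V_U \cap Y_n$ yields $\emptyset \neq f^{-1}(y) \subseteq U \cap f^{-1}(Y_n)$, so $f^{-1}(Y_n)$ is dense in $X$.

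With density in hand, the $\mathcal{I}$-H-separability of $X$ applied to $(f^{-1}(Y_n))$ produces finite sets $F_n \subseteq f^{-1}(Y_n)$ such that for every nonempty open $U \subseteq X$,
\[
\{n \in \mathbb{N} : U \cap F_n = \emptyset\} \in \mathcal{I}.
\]
I then set $G_n := f(F_n)$, which is a finite subset of $Y_n$. To check that $(G_n)$ witnesses $\mathcal{I}$-H-separability of $Y$, I would take an arbitrary nonempty open $V \subseteq Y$ and observe that $f^{-1}(V)$ is nonempty open in $X$ (continuity plus surjectivity), together with the equivalence $V \cap G_n = \emptyset \iff f^{-1}(V) \cap F_n = \emptyset$. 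Therefore
\[
\{n \in \mathbb{N} : V \cap G_n = \emptyset\} = \{n \in \mathbb{N} : f^{-1}(V) \cap F_n = \emptyset\} \in \mathcal{I},
\]
which closes the argument.

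The main (and only nontrivial) obstacle is the density pullback for the closed irreducible case; once the identity $f^{-1}(V_U) \subseteq U$ is in place the argument is essentially bookkeeping. I would present both cases in parallel, splitting only the density verification, so as to emphasize that the categorical behaviour of $\mathcal{I}$-H-separability mirrors that of ordinary H-separability and M-separability.
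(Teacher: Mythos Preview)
Your argument is correct and is precisely the standard one: the paper states Lemma~\ref{lemma3} without proof, and the proof you sketch (pull back the dense sets via $f^{-1}$, use openness or the $V_U = Y\setminus f(X\setminus U)$ trick to verify density, then push the finite witnesses forward) is exactly what is being taken for granted. There is nothing to add or correct.
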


\begin{Lemma}
\label{lemma103}
The $\mathcal{I}$-H-separability is preserved under dense as well as open subspaces.
\end{Lemma}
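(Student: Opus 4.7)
The plan is to handle the two assertions separately; in both cases the strategy is to lift the given sequence of dense subspaces of the subspace to a sequence of dense subspaces of the ambient space $X$, invoke the $\mathcal{I}$-H-separability of $X$, and then transfer the witnessing finite sets back down. The main obstacle is the open subspace case; the dense subspace case is essentially definition chasing.

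For a dense subspace $D$ of $X$, I would observe that if $(Y_n)$ is a sequence of dense subspaces of $D$, then by transitivity of density each $Y_n$ is also dense in $X$. Applying the $\mathcal{I}$-H-separability of $X$ to $(Y_n)$ yields a sequence $(F_n)$ with $F_n$ a finite subset of $Y_n$. Every nonempty open set of $D$ has the form $U \cap D$ for some nonempty open $U$ of $X$, and since $F_n \subseteq Y_n \subseteq D$, one has $(U \cap D) \cap F_n = U \cap F_n$. Hence $\{n \in \mathbb{N} : (U\cap D)\cap F_n = \emptyset\} = \{n \in \mathbb{N} : U\cap F_n = \emptyset\} \in \mathcal{I}$, so the same $(F_n)$ witnesses $\mathcal{I}$-H-separability of $D$.

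For an open subspace $W$ of $X$, the subtlety is that a dense subspace $Y_n$ of $W$ need not be dense in $X$. My plan is to enlarge each $Y_n$ by setting $Z_n = Y_n \cup (X \setminus \overline{W})$ and to argue that $Z_n$ is dense in $X$. Indeed, any nonempty open $U \subseteq X$ either meets $W$ (in which case $U \cap W$ is a nonempty open subset of $W$ and so meets $Y_n$) or is disjoint from $W$; in the latter case the openness of $U$ together with the definition of closure forces $U$ to be disjoint from $\overline{W}$ as well, so that $U \cap (X \setminus \overline{W}) = U \neq \emptyset$. I would then apply the $\mathcal{I}$-H-separability of $X$ to $(Z_n)$ to obtain finite sets $F_n \subseteq Z_n$, and set $F_n' = F_n \cap W$. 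Since $Y_n \subseteq W$ and $(X \setminus \overline{W}) \cap W = \emptyset$, one has $F_n' = F_n \cap Y_n$, a finite subset of $Y_n$. For any nonempty open $V \subseteq W$ (which is also open in $X$), $V \cap F_n' = V \cap F_n$, so $\{n \in \mathbb{N} : V \cap F_n' = \emptyset\} = \{n \in \mathbb{N} : V \cap F_n = \emptyset\} \in \mathcal{I}$, which establishes $\mathcal{I}$-H-separability of $W$.
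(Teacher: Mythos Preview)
Your proof is correct. The paper states this lemma without proof, evidently regarding both assertions as routine; your argument supplies exactly the natural details. For the dense subspace case, transitivity of density lets you apply the hypothesis to $(Y_n)$ directly, and the identity $(U\cap D)\cap F_n = U\cap F_n$ (valid because $F_n\subseteq D$) transfers the conclusion. For the open subspace case, enlarging each $Y_n$ by $X\setminus\overline{W}$ is the standard device to obtain dense subsets of the ambient space; your verification that $Z_n$ is dense is correct (an open set disjoint from $W$ is automatically disjoint from $\overline{W}$), and intersecting the witnessing finite sets back with $W$ recovers selections inside $Y_n$ without disturbing the intersection with any open $V\subseteq W$.
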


\begin{Lemma}
\label{prop2}
For a space the following assertions are equivalent.
\begin{enumerate}[wide=0pt,label={\upshape(\arabic*)},leftmargin=*]
  \item $X$ is hereditarily $\mathcal{I}$-H-separable.
  \item $X$ is hereditarily separable and every countable subspace of $X$ is $\mathcal{I}$-H-separable.
\end{enumerate}
\end{Lemma}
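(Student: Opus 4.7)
The plan is to prove the two implications separately, with the forward direction essentially automatic and the reverse one requiring a small reduction to the countable case.

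\textbf{(1) $\Rightarrow$ (2).} Suppose $X$ is hereditarily $\mathcal{I}$-H-separable. Every subspace $Y\subseteq X$ is then $\mathcal{I}$-H-separable, hence M-separable (as noted immediately after Definition~\ref{def1}), and in particular separable (apply the M-separability condition with $Y_n = Y$ for every $n$). Thus $X$ is hereditarily separable. That every countable subspace of $X$ is $\mathcal{I}$-H-separable is included in the hypothesis of $(1)$.

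\textbf{(2) $\Rightarrow$ (1).} Fix a subspace $Y\subseteq X$ and a sequence $(Y_n)$ of dense subspaces of $Y$. The idea is to shrink the $Y_n$'s to countable dense pieces and invoke the countable hypothesis. Since $X$ is hereditarily separable, each $Y_n$ is separable; choose a countable $D_n\subseteq Y_n$ dense in $Y_n$ (hence dense in $Y$). Set $Z = \bigcup_{n\in\mathbb{N}} D_n$, which is a countable subspace of $X$, and is dense in $Y$ since each $D_n$ already is. The key observation is that each $D_n$ is dense in $Z$: if $W$ is a nonempty open subset of $Z$, write $W = Z\cap U$ for some open $U$ in $Y$; since $Z$ is dense in $Y$, $U$ is nonempty open in $Y$, so $U\cap D_n\neq\emptyset$ by the density of $D_n$ in $Y$, and hence $W\cap D_n = U\cap D_n\neq\emptyset$.

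By hypothesis, $Z$ is $\mathcal{I}$-H-separable, so applying this to $(D_n)$ yields a sequence $(F_n)$ with $F_n\subseteq D_n$ finite and $\{n\in\mathbb{N}: V\cap F_n = \emptyset\}\in\mathcal{I}$ for every nonempty open $V$ of $Z$. Since $F_n\subseteq D_n\subseteq Y_n$, each $F_n$ is a finite subset of $Y_n$. For a nonempty open set $U\subseteq Y$, density of $Z$ in $Y$ gives that $U\cap Z$ is nonempty open in $Z$, and because $F_n\subseteq Z$ we have $U\cap F_n = (U\cap Z)\cap F_n$; consequently $\{n\in\mathbb{N}: U\cap F_n = \emptyset\}\in\mathcal{I}$. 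Thus $(F_n)$ witnesses $\mathcal{I}$-H-separability of $Y$.

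The only non-routine point is the construction of $Z$ and the verification that each $D_n$ remains dense in $Z$; I expect this to be the main (mild) obstacle, since without it one cannot legitimately transfer the $\mathcal{I}$-H-separability of $Z$ back to $Y$. Everything else reduces to the observation that $\mathcal{I}$-H-separability implies separability and to the transparent set-theoretic bookkeeping above.
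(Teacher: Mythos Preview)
The paper states this lemma without proof, so there is nothing to compare against directly. Your argument is correct and is the natural one: the forward implication is immediate, and for the reverse you thin each dense $Y_n$ down to a countable dense $D_n$, collect them into the countable subspace $Z=\bigcup_n D_n$, and transfer the $\mathcal{I}$-H-separability of $Z$ back to $Y$ via the density of $Z$ in $Y$. One cosmetic remark: in the verification that $D_n$ is dense in $Z$, you write ``since $Z$ is dense in $Y$, $U$ is nonempty open in $Y$''; the nonemptiness of $U$ follows simply from $W=Z\cap U\neq\emptyset$, and the density of $Z$ in $Y$ is not needed at that step (you do use it later, when passing from an open $U\subseteq Y$ to $U\cap Z$).
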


\begin{Th}
\label{thm12}
If $X$ is a countable $\mathcal{I}$-H-separable space and $Y$ is a countable space with $\pi w(Y) < \mathfrak{b}$, then $X\times Y$ is $\mathcal{I}$-H-separable.
\end{Th}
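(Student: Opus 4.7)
The plan is to reduce $\mathcal{I}$-H-separability of $X \times Y$ to that of $X$ applied once per $\pi$-base element of $Y$, and then to splice the resulting $\kappa$-many witnesses into a single sequence using a $\leq^*$-dominating function guaranteed by $\kappa := \pi w(Y) < \mathfrak{b}$.

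Let $(W_n)$ be a sequence of dense subspaces of $X \times Y$, fix a $\pi$-base $\{B_\alpha : \alpha < \kappa\}$ of $Y$, and enumerate the countable set $X \times Y = \{p_k : k \in \mathbb{N}\}$. For each $\alpha < \kappa$ and each $n$, the density of $W_n$ in $X \times Y$ forces $D_{n,\alpha} = \{x \in X : (\{x\} \times B_\alpha) \cap W_n \neq \emptyset\}$ to be dense in $X$. Applying the $\mathcal{I}$-H-separability of $X$ to $(D_{n,\alpha})_n$ produces a finite $E_n^\alpha \subseteq D_{n,\alpha}$ with $\{n : V \cap E_n^\alpha = \emptyset\} \in \mathcal{I}$ for every nonempty open $V \subseteq X$. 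For each $x \in E_n^\alpha$ we pick a witness $y$ with $(x,y) \in W_n \cap (X \times B_\alpha)$, producing a finite $F_n^\alpha \subseteq W_n$ for which $(V \times B_\alpha) \cap F_n^\alpha \neq \emptyset$ iff $V \cap E_n^\alpha \neq \emptyset$; thus $(F_n^\alpha)_n$ is already an $\mathcal{I}$-H-separability witness for test opens of the form $V \times B_\alpha$.

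To fuse these $\kappa$-many partial witnesses into a single sequence, define $\varphi_\alpha \in \mathbb{N}^{\mathbb{N}}$ by $\varphi_\alpha(n) = \max\{k : p_k \in F_n^\alpha\}$, with $\varphi_\alpha(n) = 0$ when $F_n^\alpha = \emptyset$. Since $\kappa < \mathfrak{b}$, the family $\{\varphi_\alpha : \alpha < \kappa\}$ is $\leq^*$-bounded by some $g \in \mathbb{N}^{\mathbb{N}}$, and we set $F_n = \{p_0, p_1, \ldots, p_{g(n)}\} \cap W_n$, which is a finite subset of $W_n$. Given any nonempty open $U \subseteq X \times Y$, choose a basic open $V \times W \subseteq U$ together with $\alpha < \kappa$ satisfying $B_\alpha \subseteq W$. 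For all $n$ beyond some threshold $N_\alpha$, $\varphi_\alpha(n) \leq g(n)$ forces $F_n^\alpha \subseteq F_n$, and therefore
\[
\{n : U \cap F_n = \emptyset\} \subseteq \{n : (V \times B_\alpha) \cap F_n = \emptyset\} \subseteq \{0, 1, \ldots, N_\alpha - 1\} \cup \{n : V \cap E_n^\alpha = \emptyset\},
\]
which lies in $\mathcal{I}$ since the right-hand side is the union of a finite set with a member of $\mathcal{I}$.

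The subtle point is the merging step: the naive choice $F_n = \bigcup_{\alpha \in S_n} F_n^\alpha$ over a finite $S_n \subseteq \kappa$ cannot succeed, because no scheduling by finite $S_n$'s can make each of the $\kappa$-many indices $\alpha$ appear on an $\mathcal{I}$-cofinite set of $n$ simultaneously. The dominating-function trick sidesteps this obstruction by letting the initial segment $\{p_0, \ldots, p_{g(n)}\}$ of a fixed enumeration of $X \times Y$ eventually absorb every $F_n^\alpha$, no matter how spread out within $X \times Y$ the individual $F_n^\alpha$ happen to be; countability of $X \times Y$ is what makes this single enumeration available, while $\kappa < \mathfrak{b}$ is what makes a single $g$ work uniformly in $\alpha$.
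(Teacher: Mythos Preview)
Your argument is correct and follows essentially the same route as the paper: for each $\alpha<\kappa$ you project the dense sets into $X$ along the slice $X\times B_\alpha$, apply $\mathcal{I}$-H-separability of $X$, lift the resulting finite sets back into $W_n$, and then merge the $\kappa$-many partial witnesses via a $\leq^*$-dominating function supplied by $\kappa<\mathfrak b$. The only cosmetic difference is that the paper enumerates each dense set $Y_n$ separately and takes $F_n$ to be an initial segment of that enumeration, whereas you fix a single enumeration of $X\times Y$ and intersect its initial segments with $W_n$; both bookkeeping choices yield the same conclusion for the same reason.
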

\begin{proof}
Let $\kappa < \mathfrak{b}$ and $\mathcal{B} = \{B_\alpha : \alpha < \kappa\}$ be a $\pi$-base of $Y$. Pick a sequence $(Y_n)$ of dense subspaces of $X\times Y$. For each $n$ we can choose $Y_n = \{x_m^{(n)} : m\in \mathbb{N}\}$. Let $p_1: X\times Y\to X$ and $p_2: X\times Y\to Y$ be the projection mappings on $X$ and $Y$ respectively. Let $\alpha < \kappa$ be fixed. For each $n$ let $Z_n^{(\alpha)} = p_1(Y_n\cap (X\times B_\alpha))$. Clearly each $Z_n$ is dense in $X$. Since $X$ is $\mathcal{I}$-H-separable, there exists a sequence $(F_n^{(\alpha)})$ such that for each $n$ $F_n^{(\alpha)}$ is a finite subset of $Z_n^{(\alpha)}$ and for every nonempty open set $U$ of $X$, $\{n\in \mathbb{N} : U\cap F_n^{(\alpha)} = \emptyset\} \in \mathcal{I}$. For each $n$ choose a finite subset $G_n^{(\alpha)}$ of $Y_n\cap (X\times B_\alpha)$ such that $F_n^{(\alpha)} = p_1(G_n^{(\alpha)})$. Choose a $f_\alpha \in \mathbb{N}^\mathbb{N}$ such that $G_n^{(\alpha)} \subseteq \{x_m^{(n)} : m \leq f_\alpha(n)\}$. Thus we obtain a subset $\{f_\alpha : \alpha < \kappa\}$ of $\mathbb{N}^\mathbb{N}$. Since $\kappa < \mathfrak{b}$, there exists a $f\in \mathbb{N}^\mathbb{N}$ such that $f_\alpha \leq^* f$ for all $\alpha < \kappa$. For each $n$ let $F_n = \{x_m^{(n)} : m\leq f(n)\}$. Observe that the sequence $(F_n)$ witnesses for $(Y_n)$ that $X\times Y$ is $\mathcal{I}$-H-separable. Let $U$ be a nonempty open set of $X\times Y$. Choose a nonempty open set $V$ of $X$ and a $\alpha < \kappa$ such that $V \times B_\alpha \subseteq U$. Then $\{n\in \mathbb{N} : V\cap F_n^{(\alpha)} = \emptyset\} \in \mathcal{I}$. Choose a $n_0\in \mathbb{N}$ be such that $f_\alpha(n) \leq f(n)$ for all $n\geq n_0$. We claim that $\{n\geq n_0 : (V \times B_\alpha) \cap F_n = \emptyset\} \subseteq \{n\in \mathbb{N} : V\cap F_n^{(\alpha)} = \emptyset\}$. Pick a $k \in \{n\geq n_0 : (V \times B_\alpha) \cap F_n = \emptyset\}$. Then $k\geq n_0$ and $(V \times B_\alpha) \cap F_k = \emptyset$. If possible suppose that $V\cap F_k^{(\alpha)} \neq \emptyset$. Let $x\in V\cap F_k^{(\alpha)}$. Now $x\in F_k^{(\alpha)}$ gives $x = p_1(x, y)$ for some $(x, y) \in G_k^{(\alpha)}$. It follows that $y\in B_\alpha$ and $(x, y) = x_m^{(k)}$ for some $m \leq f_\alpha(k)$. Since $f_\alpha(k) \leq f(k)$, from the construction of $F_k$ we can say that $(x , y) \in F_k$. Also since $(x, y) \in V\times B_\alpha$, $(V \times B_\alpha) \cap F_k \neq \emptyset$. Which is absurd. Thus $V\cap F_k^{(\alpha)} = \emptyset$, i.e. $k\in \{n\in \mathbb{N} : V\cap F_n^{(\alpha)} = \emptyset\}$. It follows that $\{n\geq n_0 : (V \times B_\alpha) \cap F_n = \emptyset\} \subseteq \{n\in \mathbb{N} : V\cap F_n^{(\alpha)} = \emptyset\}$. Consequently $\{n\in \mathbb{N} : (V \times B_\alpha) \cap F_n = \emptyset\} \in \mathcal{I}$, i.e. $\{n\in \mathbb{N} : U \cap F_n = \emptyset\} \in \mathcal{I}$.  Thus $X\times Y$ is $\mathcal{I}$-H-separable.
\end{proof}

\begin{Th}
\label{thm13}
Let $\{X_n : n\in \mathbb{N}\}$ be a family of spaces and $X = \prod_{n\in \mathbb{N}} X_n$. If each $n$ $X_n^\prime = \prod_{k=1}^n X_k$ is $\mathcal{I}$-H-separable, then $X$ is $\mathcal{I}$-H-separable.
\end{Th}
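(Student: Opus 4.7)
The plan is to reduce the problem to basic open sets in the product $X = \prod_{n \in \mathbb{N}} X_n$ and exploit the hypothesis that each finite sub-product $X_n^\prime$ is $\mathcal{I}$-H-separable. Every nonempty open $U \subseteq X$ contains a basic open set of the form $\pi_n^{-1}(V)$, where $\pi_n : X \to X_n^\prime$ is the projection onto the first $n$ coordinates and $V$ is a nonempty open subset of $X_n^\prime$. Since $\{m : U \cap F_m = \emptyset\} \subseteq \{m : W \cap F_m = \emptyset\}$ whenever $W \subseteq U$ and $\mathcal{I}$ is downward closed under inclusion, it suffices to verify the $\mathcal{I}$-H-separability condition for these basic open sets.

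Starting from a sequence $(Y_m)$ of dense subspaces of $X$, for each fixed $n$ the images $\pi_n(Y_m)$ form a sequence of dense subspaces of $X_n^\prime$, because projections are open continuous surjections. Applying $\mathcal{I}$-H-separability of $X_n^\prime$ to $(\pi_n(Y_m))_m$ yields, for every $n$, finite sets $F_m^{(n)} \subseteq \pi_n(Y_m)$ such that $\{m \in \mathbb{N} : V \cap F_m^{(n)} = \emptyset\} \in \mathcal{I}$ for every nonempty open $V \subseteq X_n^\prime$. For each pair $(n, m)$, lift $F_m^{(n)}$ back to $Y_m$: choose a finite $G_m^{(n)} \subseteq Y_m$ with $\pi_n(G_m^{(n)}) = F_m^{(n)}$.

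Now set $F_m = \bigcup_{n=1}^{m} G_m^{(n)}$. Each $F_m$ is a finite subset of $Y_m$. To check the property, let $U \subseteq X$ be a nonempty open set and choose $n$ and a nonempty open $V \subseteq X_n^\prime$ with $\pi_n^{-1}(V) \subseteq U$. For every $m \geq n$, $G_m^{(n)} \subseteq F_m$, so $U \cap F_m = \emptyset$ forces $\pi_n^{-1}(V) \cap G_m^{(n)} = \emptyset$ and hence $V \cap F_m^{(n)} = \emptyset$. Consequently
\[
\{m \in \mathbb{N} : U \cap F_m = \emptyset\} \subseteq \{1, 2, \ldots, n-1\} \cup \{m \in \mathbb{N} : V \cap F_m^{(n)} = \emptyset\},
\]
and the right-hand side belongs to $\mathcal{I}$ because $\mathcal{I}$ is admissible (so finite sets lie in $\mathcal{I}$) and closed under finite unions.

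The main technical point is the diagonal bookkeeping in the definition of $F_m$: truncating the union at $n \leq m$ is what keeps each $F_m$ finite, and the discarded initial segment $\{1, \ldots, n-1\}$ is automatically absorbed by $\mathcal{I}$. Apart from this, the argument is the standard projection-and-lift strategy, and no additional hypothesis on the individual factors $X_n$ is needed.
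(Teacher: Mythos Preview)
Your proof is correct and follows essentially the same approach as the paper: project the dense sets to each finite sub-product $X_n'$, apply $\mathcal{I}$-H-separability there, lift the resulting finite sets back to $Y_m$, and form the diagonal union $F_m = \bigcup_{n\le m} G_m^{(n)}$. The only difference is that you make the lifting step $F_m^{(n)} \mapsto G_m^{(n)}$ explicit, whereas the paper absorbs it into the notation by directly writing $F_k^{(n)} \subseteq Y_k$ with the condition phrased in terms of $p_n(F_k^{(n)})$.
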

\begin{proof}
Let $(Y_n)$ be a sequence of dense subspaces of $X$. Let $\{N_n : n\in \mathbb{N}\}$ be a partition of $\mathbb{N}$ into disjoint infinite subsets. For each $n$ let $p_n : X\to X_n^\prime$ be the natural projection. Fix $n$. Apply the $\mathcal{I}$-H-separability of $X_n^\prime$ to $(p_n(Y_k) : k\in \mathbb{N})$ to obtain a sequence $(F_k^{(n)} : k\in \mathbb{N})$ such that for each $k$ $F_k^{(n)}$ is a finite subset of $Y_k$ and for every nonempty open set $U$ of $X_n^\prime$, $\{k\in \mathbb{N} : U\cap p_n(F_k^{(n)}) = \emptyset\} \in \mathcal{I}$. For each $n$ choose $F_n = \cup_{k\leq n} F_n^{(k)}$. Thus we obtain a sequence $(F_n)$ such that for each $n$ $F_n$ is a finite subset of $Y_n$. Observe that $(F_n)$ witnesses for $(Y_n)$ that $X$ is $\mathcal{I}$-H-separable. Let $U$ be a nonempty open set of $X$. Choose a basic member $V$ of $X$ such that $V \subseteq U$. Then $V = \prod_{n\in \mathbb{N}} V_n$ with each $V_n$ is open in $X_n$ and $V_n = X_n$ for all $n\geq m$ for some $m\in \mathbb{N}$. Now $\{n\in \mathbb{N} : p_m(V) \cap p_m(F_n^{(m)}) = \emptyset\} \in \mathcal{I}$. We claim that $\{n\geq m : U\cap F_n = \emptyset\} \subseteq \{n\in \mathbb{N} : p_m(V) \cap p_m(F_n^{(m)}) = \emptyset\}$. Pick a $k \in \{n\geq m : U\cap F_n = \emptyset\}$. Then $k\geq m$ and $U\cap F_k = \emptyset$. If possible suppose that $p_m(V) \cap p_m(F_k^{(m)}) \neq \emptyset$. It follows that $\prod_{i=1}^m V_i \cap p_m(F_k^{(m)}) \neq \emptyset$. Now $\prod_{i=1}^m V_i \cap p_m(F_k^{(m)}) \neq \emptyset$ gives a $x = (x_n) \in F_k^{(m)}$ such that $(x_i : 1\leq i \leq m) \in \prod_{i=1}^m V_i$. Since $V_n = X_n$ for all $n\geq m$, $x\in V$. So $V\cap F_k^{(m)} \neq \emptyset$, i.e. $U\cap F_k^{(m)} \neq \emptyset$. Since $F_k = \cup_{i\leq k} F^{(i)}_k$ and $m\leq k$, $U\cap F_k \neq \emptyset$. Which is absurd. Thus $p_m(V) \cap p_m(F_k^{(m)}) = \emptyset$, i.e. $k\in \{n\in \mathbb{N} : p_m(V) \cap p_m(F_n^{(m)}) = \emptyset\}$. This gives $\{n\geq m : U\cap F_n = \emptyset\} \subseteq \{n\in \mathbb{N} : p_m(V) \cap p_m(F_n^{(m)}) = \emptyset\}$ and consequently $\{n\in \mathbb{N} : U\cap F_n = \emptyset\} \in \mathcal{I}$. Thus $X$ is $\mathcal{I}$-H-separable.
\end{proof}

We now shift our focus to compact $\mathcal{I}$-H-separable spaces and begin by recalling the following result.
\begin{Th}[{\cite{juhas89}}]
\label{thm9}
If $X$ is any compact Hausdorff space, then $X$ has a dense subspace $Y$ with $d(Y) = \pi w(X)$.
\end{Th}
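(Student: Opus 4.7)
The plan splits the goal into two matching bounds. The upper direction $d(Y) \leq \pi w(X)$ for every dense $Y \subseteq X$ is routine: if $\mathcal{B}$ is a $\pi$-base of $X$ of minimum size, then $\{B \cap Y : B \in \mathcal{B}\}$ is a $\pi$-base of $Y$ (density makes every restriction nonempty), so $d(Y) \leq \pi w(Y) \leq \pi w(X)$. The substance is the lower direction: exhibiting some dense $Y$ with $d(Y) \geq \kappa$, where $\kappa := \pi w(X)$.

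My strategy is to construct a \emph{discrete} dense subspace $Y$ of cardinality $\kappa$, since any discrete $Y$ satisfies $d(Y) = |Y|$. The chief tool is an \emph{irreducible} $\pi$-base $\mathcal{B} = \{B_\alpha : \alpha < \kappa\}$ of $X$, meaning no proper subfamily is still a $\pi$-base; such a $\mathcal{B}$ is produced by a Zorn argument inside a minimum $\pi$-base, and irreducibility supplies, for each $\alpha$, a nonempty open witness $W_\alpha$ such that no $B_\beta$ with $\beta \neq \alpha$ is contained in $W_\alpha$. By transfinite recursion along $\kappa$, I would select $y_\alpha \in B_\alpha$ together with an open neighbourhood $U_\alpha \ni y_\alpha$ with $U_\alpha \subseteq B_\alpha \cap W_\alpha$ avoiding $\{y_\beta : \beta < \alpha\}$; compact Hausdorffness (in particular regularity, plus closedness of finite point sets) is what allows the required separation at each stage.

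Setting $Y = \{y_\alpha : \alpha < \kappa\}$, density is immediate from $y_\alpha \in B_\alpha$ and the fact that $\mathcal{B}$ is a $\pi$-base. For $d(Y) \geq \kappa$, I would show that every dense $Z \subseteq Y$ gives rise to a $\pi$-subfamily $\mathcal{B}_Z := \{B_\alpha : y_\alpha \in Z\}$ of $\mathcal{B}$: given a nonempty open $V$, use $Z \cap V \neq \emptyset$ together with the witnesses $W_\alpha$ and the discreteness neighbourhoods $U_\alpha$ to locate some $B_\alpha \subseteq V$ with $y_\alpha \in Z$. Irreducibility of $\mathcal{B}$ then forces $|\mathcal{B}_Z| = \kappa$, hence $|Z| \geq \kappa$, and combining both bounds gives $d(Y) = \kappa = \pi w(X)$.

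The step I expect to be the main obstacle is the final implication ``$Z$ dense $\Rightarrow$ $\mathcal{B}_Z$ is a $\pi$-base'': converting the combinatorial data ``only $B_\alpha$ sits inside $W_\alpha$'' into a topological statement about $Z$ requires enough control over the neighbourhoods $U_\alpha$ to rule out the existence of an open set hit by $Z$ but containing none of the $B_\alpha$ with $y_\alpha \in Z$. Getting this to work cleanly is precisely where compactness (for passing to finite subcovers when shrinking the witnesses) and the Hausdorff property (for separating points from closed sets during the recursion) are both essential, and a naive choice of $y_\alpha$ without the irreducibility witnesses $W_\alpha$ would fail.
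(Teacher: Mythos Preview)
The paper does not prove this statement: Theorem~\ref{thm9} is quoted from Juh\'{a}sz--Shelah \cite{juhas89} and used as a black box. So there is no ``paper's proof'' to compare against; the only question is whether your sketch stands on its own.

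It does not, and the break is earlier than the step you flag. A minimal (``irreducible'') $\pi$-base need not exist. Your Zorn argument fails because the intersection of a descending chain of $\pi$-bases is typically \emph{not} a $\pi$-base (for any countable $\pi$-base $\{B_n : n \in \omega\}$ of a crowded space the tails $\{B_n : n \geq k\}$ are still $\pi$-bases but their intersection is empty). More decisively, $2^\omega$---and likewise $2^\kappa$ for any infinite $\kappa$---has no minimal $\pi$-base at all. If $\mathcal{B}$ were one, then (i) $\mathcal{B}$ is a $\subseteq$-antichain (your own witness argument forces $B_\beta \not\subseteq B_\alpha$ for $\beta \neq \alpha$, since $B_\beta \subseteq B_\alpha \subseteq W_\alpha$ would contradict the choice of $W_\alpha$), and (ii) every $B \in \mathcal{B}$ is a basic clopen set: choose a basic clopen $[\sigma] \subseteq B$, then some $B' \in \mathcal{B}$ with $B' \subseteq [\sigma] \subseteq B$, and the antichain condition gives $B' = B = [\sigma]$. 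But an antichain of basic clopens cannot be a $\pi$-base: given $[\sigma] \in \mathcal{B}$ and any $n \notin \operatorname{dom}\sigma$, the cylinder $[\sigma \cup \{(n,0)\}]$ must contain some $[\rho] \in \mathcal{B}$, and then $[\rho] \subsetneq [\sigma]$ violates the antichain property. So the cornerstone of your construction is unavailable precisely in the Cantor cubes, which is where the theorem has content.

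There is also a secondary gap in the recursion: you want $U_\alpha \subseteq B_\alpha \cap W_\alpha$ disjoint from $\{y_\beta : \beta < \alpha\}$ and justify this by ``regularity, plus closedness of finite point sets'', but for infinite $\alpha$ that set is not finite and you give no reason why $B_\alpha \cap W_\alpha$ is not swallowed by its closure. The actual Juh\'{a}sz--Shelah proof is genuinely delicate and does not proceed via a minimal $\pi$-base; if you want to reconstruct it, consult \cite{juhas89} directly rather than trying to patch this outline.
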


\begin{Prop}
\label{prop6}
A compact space $X$ is $\mathcal{I}$-H-separable if and only if $\pi w(X) = \omega$.
\end{Prop}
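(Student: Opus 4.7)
My plan is to prove the equivalence by handling the two directions separately, with the backward direction being essentially a direct construction and the forward direction relying on combining Juhász's theorem (Theorem~\ref{thm9}) with the hereditary properties from Lemma~\ref{lemma103}. Notably, compactness is only needed in the forward direction.

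For the backward direction, assume $\pi w(X)=\omega$ and fix a countable $\pi$-base $\{B_k : k\in\mathbb{N}\}$. Given a sequence $(Y_n)$ of dense subspaces, the plan is to form $F_n$ by picking one point of $Y_n\cap B_k$ for each $k\le n$; denseness of $Y_n$ makes this possible, and each $F_n$ is a finite subset of $Y_n$. For any nonempty open $U$, choose $k_0$ with $B_{k_0}\subseteq U$; then for all $n\ge k_0$ the chosen point in $Y_n\cap B_{k_0}$ lies in $F_n\cap U$. Hence $\{n:U\cap F_n=\emptyset\}\subseteq\{1,\dots,k_0-1\}$ is finite, and in particular belongs to the admissible ideal $\mathcal{I}$. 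This actually shows $X$ is H-separable, and therefore $\mathcal{I}$-H-separable; no compactness is used.

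For the forward direction, assume $X$ is compact and $\mathcal{I}$-H-separable. Applying Theorem~\ref{thm9} to $X$ yields a dense subspace $Y\subseteq X$ with $d(Y)=\pi w(X)$. Since $\mathcal{I}$-H-separability is preserved under dense subspaces by Lemma~\ref{lemma103}, $Y$ is itself $\mathcal{I}$-H-separable. From the chain of implications already noted in the excerpt, $\mathcal{I}$-H-separable spaces are M-separable and hence separable, so $d(Y)\le\omega$. Combining these, $\pi w(X)=d(Y)\le\omega$, which gives $\pi w(X)=\omega$ (the finite case being ruled out as soon as $X$ is infinite; otherwise the statement is trivially vacuous).

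The only real conceptual step is recognizing that the Juhász dense subspace is the right object to transfer $\mathcal{I}$-H-separability to: it is precisely the device that converts a cardinal invariant defined by $\pi$-bases into one controlled by a density, where the selective-separability hierarchy gives bounds. Once that bridge is identified, every other piece is a direct appeal to results already established earlier in the paper.
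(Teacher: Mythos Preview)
Your proof is correct. The forward direction is essentially identical to the paper's: both invoke Theorem~\ref{thm9} to obtain a dense $Y$ with $d(Y)=\pi w(X)$, transfer $\mathcal{I}$-H-separability to $Y$ via Lemma~\ref{lemma103}, and conclude $d(Y)=\omega$ from separability.

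The backward direction differs. You give a direct elementary construction (pick a point of $Y_n\cap B_k$ for each $k\le n$) that in fact yields H-separability outright, never using compactness or the cardinal $\mathfrak{b}(\mathcal{I})$. The paper instead observes that $\pi w(X)=\omega$ forces $\delta(X)=\omega$ and then cites Theorem~\ref{thm1} (the general result that $\delta(X)=\omega$ together with $\pi w(X)<\mathfrak{b}(\mathcal{I})$ implies $\mathcal{I}$-H-separability). Your argument is self-contained and slightly sharper in conclusion; the paper's route is shorter given that Theorem~\ref{thm1} is already in hand, and it situates the proposition within the paper's broader $\mathfrak{b}(\mathcal{I})$ framework. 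Both are perfectly valid; your version has the pedagogical advantage of making transparent that the backward implication requires neither compactness nor any ideal-specific cardinal bound.
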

\begin{proof}
Assume that $X$ is $\mathcal{I}$-H-separable. By Theorem~\ref{thm9}, there exists a dense subspace $Y$ of $X$ such that $d(Y) = \pi w(X)$. Then by Lemma~\ref{lemma103}, $Y$ is $\mathcal{I}$-H-separable and so $Y$ is separable, i.e. $d(Y) = \omega$. Thus $\pi w(X) = \omega$.

Conversely suppose that $\pi w(X) = \omega$. This gives $\delta (X) = \omega$. By Theorem~\ref{thm1}, $X$ is $\mathcal{I}$-H-separable.
\end{proof}

It is important to mention that compactness is essential in the above result. Indeed, since $iw([0,1]) = \omega$ and every finite power of $[0,1]$ is $\mathcal{I}$-Hurewicz, $C_p([0,1])$ is $\mathcal{I}$-H-separable (see Theorem~\ref{thm5}). But $\pi w(C_p([0,1])) = \mathfrak{c}$.

\begin{Prop}
\label{prop7}
Let $X$ be a separable compact space. If $X$ is either scattered or has countable tightness, then every continuous image of $X$ is $\mathcal{I}$-H-separable.
\end{Prop}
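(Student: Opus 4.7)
The plan is to reduce the statement to Proposition~\ref{prop6} by showing that every continuous image $Y = f(X)$ satisfies $\pi w(Y) = \omega$. First note that $Y$ is compact Hausdorff (since all spaces are Tychonoff) and that if $D \subseteq X$ is countable dense, then $f(D)$ is a countable dense subset of $Y$, so $Y$ is separable. Hence it suffices to prove $\pi w(Y) = \omega$ in each of the two cases.

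\textbf{Scattered case.} I would invoke the classical theorem that a continuous image (in the compact Hausdorff category) of a compact scattered space is scattered, so $Y$ is scattered. In a scattered space, every nonempty open set contains an isolated point of the whole space, hence the set $I$ of isolated points of $Y$ is dense. Because $\{i\}$ is open for each $i \in I$, every dense subset of $Y$ must contain $I$; thus $I$ is countable. The family $\{\{i\} : i \in I\}$ is then a countable $\pi$-base of $Y$, so $\pi w(Y) = \omega$.

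\textbf{Countable tightness case.} I would first verify that $Y$ has countable tightness. Since $X$ is compact and $Y$ is Hausdorff, $f$ is a closed map, so $f(\overline{f^{-1}(A)}) = \overline{A}$ for every $A \subseteq Y$. Given $y \in \overline{A}$, pick $x \in f^{-1}(y) \cap \overline{f^{-1}(A)}$ and apply $t(X) = \omega$ to obtain a countable $C \subseteq f^{-1}(A)$ with $x \in \overline{C}$; then $f(C) \subseteq A$ is countable and $y \in f(\overline{C}) \subseteq \overline{f(C)}$, giving $t(Y) = \omega$. Next, by Šapirovskii's theorem, countable tightness in a compact Hausdorff space implies that every point has a countable local $\pi$-base: for each $y \in Y$ there is a countable family $\mathcal{B}_y$ of nonempty open sets such that every open neighborhood of $y$ contains some member of $\mathcal{B}_y$. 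Fixing a countable dense set $D \subseteq Y$, the union $\bigcup_{d \in D} \mathcal{B}_d$ is then a countable $\pi$-base: any nonempty open $U \subseteq Y$ meets $D$ at some $d$, and $\mathcal{B}_d$ supplies a member contained in $U$. Hence $\pi w(Y) = \omega$.

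The scattered case is essentially a counting argument; the main obstacle is the countably tight case, which relies on two nontrivial classical ingredients, namely the preservation of countable tightness under continuous surjections from compact Hausdorff spaces (via closedness of $f$) and Šapirovskii's theorem linking countable tightness to countable local $\pi$-bases in the compact Hausdorff setting. Once $\pi w(Y) = \omega$ is secured in either case, Proposition~\ref{prop6} directly delivers the $\mathcal{I}$-H-separability of $Y$.
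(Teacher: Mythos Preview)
Your proof is correct and follows essentially the same route as the paper: reduce to $\pi w(Y)=\omega$ and apply Proposition~\ref{prop6}, handling the scattered case via the dense set of isolated points and the countably tight case via preservation of $t$ under closed surjections plus \v{S}apirovski\u{i}'s $\pi_\chi(Y)\le t(Y)$ for compacta, then $\pi w(Y)\le \pi_\chi(Y)\cdot d(Y)$. The paper simply cites \cite{arhan78} and \cite{sapir75} for these two classical facts, whereas you spell them out, but the argument is the same.
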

\begin{proof}
Let $Y$ be a continuous image of $X$. If $X$ is scattered, then $Y$ is scattered and separable. It follows that $\pi w(Y) = \omega$ and hence by Proposition~\ref{prop6}, $Y$ is $\mathcal{I}$-H-separable.

Now assume that $X$ has countable tightness, i.e. $t(X) = \omega$. This gives $t(Y) = \omega$ (see \cite[1.1.1]{arhan78}) and consequently $\pi_\chi(Y) = \omega$ (see \cite{sapir75}). From the separability of $Y$ we can say that $\pi w(Y) = \omega$. Again by Proposition~\ref{prop6}, $Y$ is $\mathcal{I}$-H-separable.
\end{proof}

\begin{Cor}
\label{cor10}
If a compact space $X$ is either scattered or has countable tightness, then every separable subspace of $X$ is $\mathcal{I}$-H-separable.
\end{Cor}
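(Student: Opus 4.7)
The plan is to reduce the corollary to Proposition~\ref{prop7} via a closure-plus-dense-subspace argument. Let $Y$ be a separable subspace of $X$, and fix a countable dense subset $D$ of $Y$. First I would take the closure $K = \overline{D}$ inside $X$: since $X$ is compact, $K$ is a compact subspace of $X$, and it is separable by construction (with $D$ dense and countable in $K$).

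Next I would verify that $K$ inherits the structural hypothesis of the corollary. Both scatteredness and countable tightness are preserved by arbitrary subspaces, so if $X$ is scattered then so is $K$, and if $t(X)=\omega$ then $t(K)=\omega$. Thus $K$ is a separable compact space that is either scattered or of countable tightness, so Proposition~\ref{prop7}, applied to $K$ with the identity map as the continuous surjection, yields that $K$ is $\mathcal{I}$-H-separable.

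Finally, I would pass from $K$ to $Y$. Since $D \subseteq Y$ and $D$ is dense in $Y$, we have $Y \subseteq \overline{D} = K$, and because $D$ is dense in $K$ it follows a fortiori that $Y$ is dense in $K$. Lemma~\ref{lemma103} then transfers $\mathcal{I}$-H-separability from $K$ to its dense subspace $Y$, completing the proof.

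There is no real obstacle here; the only step that requires a brief check is the observation that $Y \subseteq K$ (so that $Y$ genuinely sits as a dense subspace of $K$ rather than just sharing a dense set with it), and that both scatteredness and countable tightness descend to subspaces. Everything else is a direct invocation of Proposition~\ref{prop7} and Lemma~\ref{lemma103}.
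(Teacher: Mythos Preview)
Your argument is correct and is precisely the intended derivation: the paper states Corollary~\ref{cor10} without proof immediately after Proposition~\ref{prop7}, and passing to the compact closure $K=\overline{D}=\overline{Y}$, applying Proposition~\ref{prop7} to $K$, and then descending to the dense subspace $Y$ via Lemma~\ref{lemma103} is exactly the route the placement of the corollary suggests. The two checks you flag (that $Y\subseteq K$ and that scatteredness and countable tightness are hereditary) are the only non-automatic points, and both are handled correctly.
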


Thus if $X$ is a countable non-$\mathcal{I}$-H-separable space, then $X$ can not be embedded in a compact space of countable tightness.

In the following example we show that a continuous image of a compact $\mathcal{I}$-H-separable space may not be $\mathcal{I}$-H-separable.
\begin{Ex}
\label{ex3}
There exists a compact $\mathcal{I}$-H-separable space which has a non-$\mathcal{I}$-H-separable continuous image.
\end{Ex}
\begin{proof}
Consider the space $\beta \mathbb{N}$. Since $\pi w(\beta \mathbb{N}) = \omega$ (see \cite{kkjv}), by Proposition~\ref{prop6}, $\beta \mathbb{N}$ is $\mathcal{I}$-H-separable. The Tychonoff cube $\mathbb{I}^\mathfrak{c}$ can be obtained as a continuous image of $\beta \mathbb{N}$, where $\mathbb{I} = [0,1]$. $\mathbb{I}^\mathfrak{c}$ has a countable dense subspace which is not $\mathcal{I}$-H-separable (see \cite[Example 2.14]{bella08} for explanation). By Lemma~\ref{lemma103}, $\mathbb{I}^\mathfrak{c}$ is not $\mathcal{I}$-H-separable.
\end{proof}

Note that every continuous image of a compact space $X$ is $\mathcal{I}$-H-separable does not guarantee that $t(X) = \omega$. Indeed, there exists a separable compact scattered space $X$ with $t(X) \geq \omega_1$ (see \cite[Example 2.8]{bella08}). By Proposition~\ref{prop7}, every continuous image of $X$ is $\mathcal{I}$-H-separable.

\subsection{The cardinal number $\mathfrak{b}(\mathcal{I})$ and the space $2^\kappa$}
\begin{Th}
\label{thm1}
If $\delta(X) = \omega$ and $\pi w(X) < \mathfrak{b} (\mathcal{I})$, then $X$ is $\mathcal{I}$-H-separable.
\end{Th}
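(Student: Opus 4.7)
The plan is to run the diagonalization of Theorem~\ref{thm12} but with a single $\pi$-base for $X$ and with the ideal-bounding cardinal $\mathfrak{b}(\mathcal{I})$ playing the role of $\mathfrak{b}$. First, I would use $\delta(X) = \omega$ to reduce to countable dense subsets: given a sequence $(Y_n)$ of dense subspaces of $X$, for each $n$ the hypothesis yields a countable dense $D_n = \{y_m^{(n)} : m\in\mathbb{N}\}\subseteq Y_n$, and $D_n$ is still dense in $X$. Second, set $\kappa = \pi w(X) < \mathfrak{b}(\mathcal{I})$ and fix a $\pi$-base $\{B_\alpha : \alpha < \kappa\}$ of $X$.

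Next, for each $\alpha < \kappa$ and each $n$, define
\[
f_\alpha(n) = \min\{ m \in \mathbb{N} : y_m^{(n)} \in B_\alpha\},
\]
which is well defined since $D_n$ is dense in $X$ and $B_\alpha$ is nonempty open. This produces a family $\{f_\alpha : \alpha < \kappa\}\subseteq \mathbb{N}^\mathbb{N}$ of size $<\mathfrak{b}(\mathcal{I})$. By the very definition of $\mathfrak{b}(\mathcal{I})$ I obtain an $\mathcal{I}$-upper bound: some $f\in\mathbb{N}^\mathbb{N}$ with $\{n\in\mathbb{N} : f_\alpha(n) > f(n)\}\in\mathcal{I}$ for every $\alpha<\kappa$.

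Finally, set $F_n = \{y_m^{(n)} : m \leq f(n)\}$, a finite subset of $Y_n$. To verify that $(F_n)$ witnesses $\mathcal{I}$-H-separability, take any nonempty open $U\subseteq X$ and pick $\alpha<\kappa$ with $B_\alpha\subseteq U$. Whenever $f_\alpha(n)\le f(n)$, the point $y_{f_\alpha(n)}^{(n)}$ lies in $B_\alpha\cap F_n\subseteq U\cap F_n$, so
\[
\{n\in\mathbb{N} : U\cap F_n = \emptyset\}\subseteq \{n\in\mathbb{N} : f_\alpha(n) > f(n)\}\in\mathcal{I},
\]
and hereditariness of $\mathcal{I}$ finishes the proof.

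The only place that needs real care is the extraction of the countable dense $D_n\subseteq Y_n$ (so the chosen finite sets are genuinely subsets of $Y_n$, not merely of $X$); once that is in place, the rest is a routine rewriting of the $\mathfrak{b}$-argument with $\leq^\ast$ replaced by $\leq_\mathcal{I}$. I do not anticipate any additional obstacle.
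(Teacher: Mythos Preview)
Your proposal is correct and follows essentially the same argument as the paper's own proof: both use $\delta(X)=\omega$ to pass to countable dense subsets $D_n\subseteq Y_n$, define $f_\alpha(n)$ as the first index hitting the $\alpha$th $\pi$-base element, invoke $\kappa<\mathfrak{b}(\mathcal{I})$ to find a single $\mathcal{I}$-dominating $g$, and set $F_n=\{y_m^{(n)}:m\le g(n)\}$. The only differences are notational (your $B_\alpha,f$ versus the paper's $U_\alpha,g$), and your remark about extracting $D_n\subseteq Y_n$ is exactly the care the paper takes as well.
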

\begin{proof}
Let $\mathcal{B} = \{U_\alpha : \alpha < \kappa\}$ be a $\pi$-base for $X$ with $U_\alpha \neq \emptyset$ for all $\alpha < \kappa$. Assume that $\kappa < \mathfrak{b} (\mathcal{I})$. Let $(Y_n)$ be a sequence of dense subspaces of $X$. Since $\delta(X) = \omega$, we get a sequence $(Z_n)$ such that for each $n$ $Z_n = \{x_m^{(n)} : m\in \mathbb{N}\}$ is a dense subspace of $Y_n$. For each $\alpha< \kappa$ we define a $f_\alpha \in \mathbb{N}^\mathbb{N}$ by $f_\alpha(n) = \min \{m\in \mathbb{N} : x_m^{(n)} \in U_\alpha\}$. Since $\kappa < \mathfrak{b} (\mathcal{I})$, there exists a $g\in \mathbb{N}^\mathbb{N}$ such that $\{n\in \mathbb{N} : f_\alpha(n) > g(n)\} \in \mathcal{I}$ for all $\alpha < \kappa$. For each $n$ let $F_n = \{x_m^{(n)} : m\leq g(n)\}$. Observe that the sequence $(F_n)$ witnesses for $(Y_n)$ that $X$ is $\mathcal{I}$-H-separable. Let $U$ be a nonempty open set of $X$. Then there exists a $\beta < \kappa$ such that $U_\beta \subseteq U$. We claim that $\{n\in \mathbb{N} : U_\beta\cap F_n = \emptyset\} \subseteq \{n\in \mathbb{N} : f_\beta(n) > g(n)\}$. Pick a $k\in \{n\in \mathbb{N} : U_\beta\cap F_n = \emptyset\}$. Then $U_\beta \cap F_k = \emptyset$, i.e. $x_m^{(k)} \notin U_\beta$ for all $m\leq g(k)$. Since $x_{f_\beta(k)} \in U_\beta$ and $x_m^{(k)} \notin U_\beta$ for all $m\leq g(k)$, $f_\beta(k) > g(k)$ and consequently $k\in \{n\in \mathbb{N} : f_\beta(n) > g(n)\}$. Thus $\{n\in \mathbb{N} : U_\beta\cap F_n = \emptyset\} \subseteq \{n\in \mathbb{N} : f_\beta(n) > g(n)\}$. It follows that $\{n\in \mathbb{N} : U\cap F_n = \emptyset\} \in \mathcal{I}$ and hence $X$ is $\mathcal{I}$-H-separable.
\end{proof}

Recall from \cite{kkjv} that for $2^\kappa$ (the Cantor cube of weight $\kappa$), $\pi w(2^\kappa) = \kappa = w(2^\kappa)$.
\begin{Cor}
\label{cor1}
If $\kappa < \mathfrak{b} (\mathcal{I})$, then every countable subspace of $2^\kappa$ is $\mathcal{I}$-H-separable.
\end{Cor}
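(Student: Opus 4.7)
The plan is to deduce this corollary directly from Theorem~\ref{thm1} applied to the countable subspace $Y$, so the task reduces to verifying the two hypotheses $\delta(Y)=\omega$ and $\pi w(Y) < \mathfrak{b}(\mathcal{I})$.

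First I would observe that $\delta(Y)=\omega$ is immediate: if $Y$ is countable, then every dense subspace of $Y$ is countable, so $\delta(Y)=\omega$ (this is the same observation used in Examples~\ref{ex1} and \ref{ex2}).

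The second hypothesis requires a little more care since $\pi$-weight is not hereditary in general. I would argue through the weight instead, using the monotonicity $w(Y)\le w(2^\kappa)$ for subspaces. Combining the inequalities recorded in the preliminaries with the fact $w(2^\kappa)=\kappa$ from \cite{kkjv} that is recalled just before the corollary, we get
\[
\pi w(Y) \le w(Y) \le w(2^\kappa) = \kappa < \mathfrak{b}(\mathcal{I}).
\]

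With both hypotheses verified, an application of Theorem~\ref{thm1} to $Y$ yields that $Y$ is $\mathcal{I}$-H-separable. There is no real obstacle here; the only subtle point is not trying to invoke hereditariness of $\pi$-weight directly, but routing through the (hereditary) weight instead.
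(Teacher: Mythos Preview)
Your proof is correct and follows essentially the same route as the paper's own argument: both verify $\delta(Y)=\omega$ from countability and bound $\pi w(Y)\le w(Y)\le w(2^\kappa)=\kappa$ via the monotonicity of weight before invoking Theorem~\ref{thm1}. Your explicit remark about avoiding the (false) hereditariness of $\pi$-weight and routing through the weight instead matches exactly what the paper does.
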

\begin{proof}
Let $X$ be a countable subspace of $2^\kappa$. Then $\delta(X) = \omega$. Since $w(2^\kappa) = \kappa$ and weight is monotone (i.e. for every subspace $Y$ of a space $X$, $w(Y) \leq w(X)$), $w(X) \leq \kappa$. Also since $\pi w (X) \leq w(X)$, $\pi w(X) \leq \kappa$. Hence the result.
\end{proof}

\begin{Th}
\label{thm2}
The space $2^{\mathfrak{b}(\mathcal{I})}$ contains a countable dense subspace which is not $\mathcal{I}$-H-separable.
\end{Th}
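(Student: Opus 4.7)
The plan is to mimic the classical construction of \cite[Theorem 31]{bella09} for the bounding number $\mathfrak{b}$, replacing $\mathfrak{b}$ by $\mathfrak{b}(\mathcal{I})$ and the $\leq^*$-unbounded family by an $\leq_\mathcal{I}$-unbounded family of cardinality $\mathfrak{b}(\mathcal{I})$; the coordinatewise structure of the dense subspace will be rigged so that the unboundedness directly forces $\mathcal{I}$-positively many failures.

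Write $\kappa = \mathfrak{b}(\mathcal{I})$ and fix an $\leq_\mathcal{I}$-unbounded family $\{f_\alpha : \alpha < \kappa\} \subseteq \mathbb{N}^{\mathbb{N}}$. First I would preprocess the family: since $\kappa \le \mathfrak{c}$, a standard pairing with a size-$\mathfrak{c}$ family of strictly increasing, pairwise eventually different functions (obtained, e.g., from an almost disjoint family of infinite subsets of $\mathbb{N}$) lets me assume each $f_\alpha$ is strictly increasing and $\{n : f_\alpha(n) = f_\beta(n)\}$ is finite for distinct $\alpha, \beta$, while retaining $\leq_\mathcal{I}$-unboundedness. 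I would also fix a partition $\mathbb{N} = \bigsqcup_k N_k$ into infinite pieces with $\min N_k \geq k$.

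Then set
\[X := \{y_{n,F} : n \in \mathbb{N},\ F \in [\mathbb{N}]^{<\omega}\} \subseteq 2^\kappa, \qquad y_{n,F}(\alpha) = 1 \iff f_\alpha(n) \in F.\]
Clearly $X$ is countable. For density, given a basic clopen $[\sigma]$ with $\sigma : \{\alpha_1, \dots, \alpha_r\} \to 2$, pairwise eventual difference provides cofinitely many $n$ at which $f_{\alpha_1}(n), \dots, f_{\alpha_r}(n)$ are pairwise distinct; for such an $n$, $F := \{f_{\alpha_i}(n) : \sigma(\alpha_i) = 1\}$ puts $y_{n, F}$ in $[\sigma]$. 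Putting $Y_k := \{y_{n, F} : n \in N_k, F \in [\mathbb{N}]^{<\omega}\}$, the same argument, restricting $n$ to the infinite set $N_k$, shows that each $Y_k$ is dense in $X$.

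To show $X$ is not $\mathcal{I}$-H-separable, I would take arbitrary finite $F_k \subseteq Y_k$ and set $g(k) := \max\big(\{0\} \cup \{\max F : y_{n,F} \in F_k,\ F \neq \emptyset\}\big)$. By $\leq_\mathcal{I}$-unboundedness, pick $\alpha < \kappa$ with $A := \{k : f_\alpha(k) > g(k)\} \notin \mathcal{I}$, and consider $U_\alpha := \{y \in 2^\kappa : y(\alpha) = 1\}$ (nonempty and open, and meeting $X$). For each $k \in A$ and each $y_{n,F} \in F_k$, the membership $n \in N_k$ forces $n \geq k$, so strict monotonicity yields $f_\alpha(n) \geq f_\alpha(k) > g(k) \geq \max F$, whence $f_\alpha(n) \notin F$ and $y_{n,F} \notin U_\alpha$. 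Therefore $U_\alpha \cap F_k = \emptyset$ for every $k \in A$, and $\{k : U_\alpha \cap F_k = \emptyset\} \supseteq A \notin \mathcal{I}$; since $(F_k)$ was arbitrary, $X$ cannot be $\mathcal{I}$-H-separable.

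The main delicate step will be the preprocessing of the unbounded family: strict monotonicity is what lets me propagate $f_\alpha(k) > g(k)$ forward to $n \in N_k$, and pairwise eventual difference is what makes the density arguments for $X$ and the $Y_k$ go through. Both reductions can be arranged without enlarging the family, so the construction stays inside $2^{\mathfrak{b}(\mathcal{I})}$, which is what the statement demands.
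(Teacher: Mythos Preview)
Your argument is correct, but it follows a genuinely different route from the paper's. The paper starts with an \emph{arbitrary} countable dense set $X=\{f_m:m\in\mathbb{N}\}\subseteq 2^{\mathfrak{b}(\mathcal{I})}$ and, given an $\mathcal{I}$-unbounded family $\{g_\alpha:\alpha<\mathfrak{b}(\mathcal{I})\}$, defines
\[
h_m^{(n)}(\alpha)=\begin{cases}1,& m<g_\alpha(n)\\ f_m(\alpha),&\text{otherwise},\end{cases}
\]
sets $Y_n=\{h_m^{(n)}:m\in\mathbb{N}\}$ and takes $Y=\bigcup_n Y_n$. Density of each $Y_n$ comes for free: any basic open set meets $X$ in infinitely many $f_m$, so one may choose $m$ above the finitely many thresholds $g_\alpha(n)$ for the relevant coordinates, and then $h_m^{(n)}$ agrees with $f_m$ there. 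Failure of $\mathcal{I}$-H-separability is witnessed by the single coordinate $\beta$ where $g_\beta$ is not $\leq_\mathcal{I}$-dominated by the function $f$ bounding the chosen finite sets. No preprocessing of the unbounded family and no auxiliary partition of $\mathbb{N}$ are needed.

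Your construction instead builds the dense set \emph{from} the unbounded family via $y_{n,F}(\alpha)=1\iff f_\alpha(n)\in F$, which forces the two reductions you flag: pairwise eventual difference (to get density) and strict monotonicity together with the partition $\{N_k\}$ with $\min N_k\ge k$ (to transport $f_\alpha(k)>g(k)$ to all $n\in N_k$). These reductions are legitimate---e.g.\ $h_\alpha(n)=2^{e_\alpha(n)}(2f'_\alpha(n)+1)$, with $f'_\alpha$ a strictly increasing majorant of $f_\alpha$ and $e_\alpha$ the increasing enumeration of a member of an almost disjoint family, does the job---so the proof stands. The paper's version is shorter and avoids the extra machinery; yours has the minor virtue that the space is given by an explicit formula in the $f_\alpha$ alone. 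One small point to tighten: when you define $g(k)$, fix for each element of $F_k$ a single representation $y_{n,F}$ with $n\in N_k$, so that the maximum ranges over a finite set.
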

\begin{proof}
Pick a countable dense subspace $X = \{f_n : n\in \mathbb{N}\}$ of $2^{\mathfrak{b}(\mathcal{I})}$ and an $\mathcal{I}$-unbounded subset $\{g_\alpha : \alpha < \mathfrak{b}(\mathcal{I})\}$ of $\mathbb{N}^\mathbb{N}$. For each $n, m \in \mathbb{N}$ define a $h_m^{(n)} \in 2^{\mathfrak{b}(\mathcal{I})}$ by
\[h_m^{(n)} (\alpha) = \begin{cases}
                1, & \mbox{if } m < g_\alpha(n) \\
                f_m (\alpha), & \mbox{otherwise}.
              \end{cases}\]
We claim that for each $n$ $Y_n = \{h_m^{(n)} : m\in \mathbb{N}\}$ is dense in $2^{\mathfrak{b}(\mathcal{I})}$. Fix $n$. Choose a basic open set of $U = \{f\in 2^{\mathfrak{b} (\mathcal{I})} : f \upharpoonright_\kappa = \sigma\}$ of $2^{\mathfrak{b}(\mathcal{I})}$, where $\kappa$ is a finite subset of $\mathfrak{b} (\mathcal{I})$ and $\sigma : \kappa \to 2$ is a function. Define $g = \max \{g_\alpha : \alpha\in \kappa\}$. Since $X$ is dense in $2^{\mathfrak{b}(\mathcal{I})}$, $U\cap X$ is infinite, i.e. there exists a $m> g(n)$ such that $f_m\upharpoonright_\kappa = \sigma$. Pick $\alpha \in \kappa$. Then from the definition of $g$ we get $g_\alpha(n) \leq g(n)$ and so $m > g_\alpha(n)$. This gives us $h_m^{(n)}(\alpha) = f_m(\alpha)$. It follows that $h_m^{(n)} \upharpoonright_\kappa = \sigma$ and hence $U\cap Y_n \neq \emptyset$. Thus $Y_n$ is dense in $2^{\mathfrak{b}(\mathcal{I})}$.

Choose $Y = \cup_{n\in \mathbb{N}} Y_n$. Then $Y$ is dense in $2^{\mathfrak{b}(\mathcal{I})}$. We now claim that the sequence $(Y_n)$ for $Y$ that $Y$ is not $\mathcal{I}$-H-separable. Let $(F_n)$ be a sequence such that for each $n$ $F_n$ is a finite subset of $Y_n$. Let us consider a $f\in \mathbb{N}^\mathbb{N}$ such that $F_n \subseteq \{h_m^{(n)} : m < f(n)\}$. Since $\{g_\alpha : \alpha < \mathfrak{b}(\mathcal{I})\}$ is unbounded, there exists a $\beta < \mathfrak{b}(\mathcal{I})$ such that $\{n\in \mathbb{N} : g_\beta(n) > f(n)\} \notin \mathcal{I}$, i.e. $A = \{n\in \mathbb{N} : g_\beta(n) > f(n)\}$ is infinite. Let $n\in A$. Choose a $h_m^{(n)} \in F_n$. Then $m < f(n) < g_\beta(n)$ and so $h_m^{(n)} (\beta) = 1$. Thus for any $n\in A$ and any $h \in F_n$ we can say that $h(\beta) = 1$. For the basic open set $V = \{h\in 2^{\mathfrak{b}(\mathcal{I})} : h(\beta) = 0\}$ of $2^{\mathfrak{b}(\mathcal{I})}$, we have $V \cap F_n = \emptyset$ for all $n \in A$, i.e. $A \subseteq \{n\in \mathbb{N} : V \cap F_n = \emptyset\}$ and hence  $\{n\in \mathbb{N} : V \cap F_n = \emptyset\} \notin \mathcal{I}$. Since the sequence $(F_n)$ is arbitrarily chosen, $Y$ is not $\mathcal{I}$-H-separable.
\end{proof}

\begin{Cor}
\label{cor2}
The smallest $\pi$-weight of a countable non-$\mathcal{I}$-H-separable space is $\mathfrak{b} (\mathcal{I})$.
\end{Cor}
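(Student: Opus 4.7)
The plan is to sandwich the quantity in question between $\mathfrak{b}(\mathcal{I})$ and itself by combining Theorem~\ref{thm1} and Theorem~\ref{thm2}. That is, I would verify separately a lower bound saying every countable non-$\mathcal{I}$-H-separable space has $\pi$-weight at least $\mathfrak{b}(\mathcal{I})$, and an upper bound exhibiting a concrete countable non-$\mathcal{I}$-H-separable space whose $\pi$-weight is exactly $\mathfrak{b}(\mathcal{I})$.

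For the lower bound I would simply contrapositivize Theorem~\ref{thm1}. If $X$ is any countable space, then $\delta(X) = \omega$ automatically, since every dense subspace of $X$ is itself countable. Therefore, whenever $\pi w(X) < \mathfrak{b}(\mathcal{I})$, Theorem~\ref{thm1} applies and forces $X$ to be $\mathcal{I}$-H-separable. Equivalently, a countable non-$\mathcal{I}$-H-separable space $X$ must satisfy $\pi w(X) \geq \mathfrak{b}(\mathcal{I})$.

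For the upper bound I would take the countable dense subspace $Y$ of $2^{\mathfrak{b}(\mathcal{I})}$ produced by Theorem~\ref{thm2}, which fails to be $\mathcal{I}$-H-separable. Since $Y$ is dense in $2^{\mathfrak{b}(\mathcal{I})}$, the traces on $Y$ of any $\pi$-base of $2^{\mathfrak{b}(\mathcal{I})}$ form a $\pi$-base of $Y$ of the same size, so $\pi w(Y) \leq \pi w(2^{\mathfrak{b}(\mathcal{I})}) = \mathfrak{b}(\mathcal{I})$. Combining this with the lower bound from the previous paragraph yields $\pi w(Y) = \mathfrak{b}(\mathcal{I})$, so the minimum is attained and equals $\mathfrak{b}(\mathcal{I})$.

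There is no real obstacle here, as the heavy lifting has been done in Theorems~\ref{thm1} and~\ref{thm2}; the corollary is essentially a bookkeeping step. The only minor point to record is the standard observation that $\pi$-weight is monotone under passage to a dense subspace, which underlies the bound $\pi w(Y) \leq \pi w(2^{\mathfrak{b}(\mathcal{I})})$.
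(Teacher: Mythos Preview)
Your proposal is correct and follows essentially the same approach as the paper: the lower bound is obtained from Theorem~\ref{thm1} via $\delta(X)=\omega$ for countable $X$, and the upper bound from the countable dense non-$\mathcal{I}$-H-separable $Y\subseteq 2^{\mathfrak{b}(\mathcal{I})}$ of Theorem~\ref{thm2}, whose $\pi$-weight is then pinned down as $\mathfrak{b}(\mathcal{I})$. Your extra remark that $\pi$-weight passes to dense subspaces is a welcome justification the paper leaves implicit.
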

\begin{proof}
Let $X$ be a countable space. Then $\delta(X) = \omega$. If $\pi w(X) < \mathfrak{b} (\mathcal{I})$, then by Theorem~\ref{thm1}, $X$ is $\mathcal{I}$-H-separable.

Now by Theorem~\ref{thm2}, $2^{\mathfrak{b} (\mathcal{I})}$ contains a countable subspace $Y$  such that $\pi w(Y) = \mathfrak{b} (\mathcal{I})$ and $Y$ is not $\mathcal{I}$-H-separable. Thus the smallest $\pi$-weight of a countable non-$\mathcal{I}$-H-separable space is $\mathfrak{b} (\mathcal{I})$.
\end{proof}

\begin{Th}[{\cite[Theorem 46]{bella09}}]
\label{thm7}
The space $2^\mathfrak{c}$ contains a countable dense H-separable subspace.
\end{Th}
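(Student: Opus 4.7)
My plan is to exhibit the desired countable dense H-separable subspace of $2^\mathfrak{c}$ by means of a strongly independent family on $\mathbb{N}$ of size $\mathfrak{c}$, and then to verify H-separability through a careful diagonal selection.

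I would first fix a Fichtenholz--Kantorovich strongly independent family $\{A_\alpha : \alpha < \mathfrak{c}\} \subseteq \mathcal{P}(\mathbb{N})$, that is, one in which for every pair of disjoint finite sets $F, G \subset \mathfrak{c}$ the Boolean combination $A_F^G := \bigcap_{\alpha \in F} A_\alpha \cap \bigcap_{\alpha \in G}(\mathbb{N} \setminus A_\alpha)$ is infinite. Define $\phi : \mathbb{N} \to 2^\mathfrak{c}$ by $\phi(n)(\alpha) = \chi_{A_\alpha}(n)$, and set $X = \phi(\mathbb{N})$. Every basic open set of $2^\mathfrak{c}$ has the form $[\sigma]$ for a finite partial $\{0,1\}$-function $\sigma$, and $\phi^{-1}([\sigma]) = A_{\sigma^{-1}(1)}^{\sigma^{-1}(0)} \neq \emptyset$, so $X$ is a countable dense subspace of $2^\mathfrak{c}$.

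To verify H-separability I would pick any sequence $(Y_n)$ of dense subspaces of $X$ and write $Y_n = \phi(M_n)$ with $M_n \subseteq \mathbb{N}$; density of $Y_n$ is equivalent to $M_n \cap A_F^G \neq \emptyset$ for every pair $(F, G)$. The finite witnesses $F_n \subseteq Y_n$ then correspond to finite $N_n \subseteq M_n$ with the property that for each $(F, G)$ the set $\{n \in \mathbb{N} : N_n \cap A_F^G = \emptyset\}$ is finite. My strategy would be a diagonal enumeration: fix a progressively growing family of "types" $\mathcal{T}_n$ of pairs $(F, G)$, arranged so that every type arising from a basic open eventually enters some $\mathcal{T}_n$, and at stage $n$ include in $N_n$ one witness $\phi(m)$ for each $(F, G) \in \mathcal{T}_n$, drawn from $M_n \cap A_F^G$ (which is nonempty by density).

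The main obstacle is that $2^\mathfrak{c}$ has $\mathfrak{c}$ many basic opens, and a direct density argument shows that $\pi w(X) = \pi w(2^\mathfrak{c}) = \mathfrak{c}$ (for $X$ dense and any incomparable finite partial functions $\sigma_1 \not\supseteq \sigma_2$, density forces $[\sigma_1] \cap X \not\subseteq [\sigma_2] \cap X$, so a $\pi$-base of $X$ is essentially a cofinal family of finite partial functions, of size $\mathfrak{c}$). In particular Theorem~\ref{thm1} is unavailable, and there is no countable $\pi$-base to which a simple diagonal selection can be anchored. The resolution must exploit the \emph{pointwise} nature of H-separability (each basic open needs only a finite failure set, with threshold allowed to depend on the open) together with the strong infinitude of every $A_F^G$ afforded by the Fichtenholz--Kantorovich family, ensuring enough "room" inside each $M_n$ to accommodate every later type. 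Orchestrating these choices so that every basic open is eventually witnessed, despite the uncountability of the type space, is the delicate core of the argument and is precisely what is carried out in the proof of \cite[Theorem 46]{bella09}.
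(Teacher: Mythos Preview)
The paper does not supply a proof of this statement; it is quoted verbatim from \cite[Theorem~46]{bella09} and used only to derive Corollary~\ref{cor8}. So there is nothing in the present paper to compare your argument against directly.

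That said, your proposal has a genuine gap and also misidentifies the method of \cite{bella09}. Your diagonal scheme requires, at each stage $n$, a \emph{finite} family $\mathcal{T}_n$ of types $(F,G)$ whose union over $n$ exhausts all basic open sets of $2^{\mathfrak c}$; but there are $\mathfrak c$ such types, so no countable enumeration of finite stages can cover them. You acknowledge this obstacle yourself and then defer the ``delicate core'' to \cite{bella09}, but that reference does \emph{not} proceed via an arbitrary independent family plus a diagonal selection. In fact the paper you are reading shows (Theorem~\ref{thm2}, and compare Remark~\ref{rem102}) that large Cantor cubes contain countable dense subspaces that fail even $\mathcal I$-H-separability, so a generic independent-family embedding $\phi(\mathbb N)$ carries no a priori reason to be H-separable; some structural input beyond strong independence is essential.

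The argument actually used in \cite{bella09} is $C_p$-theoretic, and traces of it appear in this paper at Theorem~\ref{thm11} and Remark~\ref{rem101}. One takes a zero-dimensional compact metrizable space $K$ of size $\mathfrak c$ (e.g.\ $K=2^\omega$); since $K$ is compact, every finite power of $K$ is Hurewicz, so by \cite[Proposition~44]{bella09} the space $C_p(K,2)$ is H-separable. As $K$ is zero-dimensional, $C_p(K,2)$ is dense in $2^{K}=2^{\mathfrak c}$, and any countable dense subset of $C_p(K,2)$ is then a countable dense H-separable subspace of $2^{\mathfrak c}$ (H-separability passes to dense subspaces). If you want to salvage your write-up, replace the independent-family construction with this $C_p(K,2)$ route; the verification is then immediate from results already available in the paper (Theorem~\ref{thm5} or its H-separable analogue in \cite{bella09}).
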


\begin{Cor}
\label{cor8}
The space $2^\mathfrak{c}$ contains a countable dense $\mathcal{I}$-H-separable subspace.
\end{Cor}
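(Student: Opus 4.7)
The plan is to derive this corollary directly from Theorem~\ref{thm7} by exploiting the trivial implication ``H-separable $\Rightarrow$ $\mathcal{I}$-H-separable'', which was already noted in the paper immediately after Theorem~\ref{thm8}.

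More concretely, I would first invoke Theorem~\ref{thm7} to obtain a countable subspace $Y$ of $2^\mathfrak{c}$ that is dense in $2^\mathfrak{c}$ and H-separable. Next, I would observe that H-separability is a formally stronger property than $\mathcal{I}$-H-separability for any proper admissible ideal $\mathcal{I}$ on $\mathbb{N}$: given a sequence $(Y_n)$ of dense subspaces and a witness $(F_n)$ for H-separability, every nonempty open $U \subseteq Y$ satisfies that $\{n \in \mathbb{N} : U \cap F_n = \emptyset\}$ is finite, hence in $\fin \subseteq \mathcal{I}$, so $(F_n)$ also witnesses $\mathcal{I}$-H-separability. Applying this to the subspace $Y$ gives the desired conclusion.

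There is essentially no obstacle here; the content of the corollary is packaged entirely inside Theorem~\ref{thm7}, and the only step is to note the one-line inclusion $\fin \subseteq \mathcal{I}$ that makes H-separability stronger than $\mathcal{I}$-H-separability. If one preferred, one could alternatively derive the statement without Theorem~\ref{thm7}: since $\mathfrak{b}(\mathcal{I}) \leq \mathfrak{d} \leq \mathfrak{c}$ in general and the construction of Theorem~\ref{thm2} produces a \emph{non}-$\mathcal{I}$-H-separable countable dense subspace of $2^{\mathfrak{b}(\mathcal{I})}$, one would still need a genuinely positive construction for $2^\mathfrak{c}$, which is exactly what Theorem~\ref{thm7} supplies. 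Hence citing Theorem~\ref{thm7} is the natural route.
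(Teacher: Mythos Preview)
Your proposal is correct and matches the paper's intended argument: Corollary~\ref{cor8} is stated without proof precisely because it follows immediately from Theorem~\ref{thm7} together with the observation (made just after Theorem~\ref{thm8}) that every H-separable space is $\mathcal{I}$-H-separable. Your final paragraph correctly identifies why no shortcut through Theorem~\ref{thm2} is available, but this commentary is not needed for the proof itself.
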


A space $X$ is said to have the Hurewicz property \cite{coc1} (see also \cite{coc2}) if for each sequence $(\mathcal{U}_n)$ of open covers of $X$ there exists a sequence $(\mathcal{V}_n)$ such that for each $n$ $\mathcal{V}_n$ is a finite subset of $\mathcal{U}_n$ and each $x\in X$ belongs to $\cup \mathcal{V}_n$ for all but finitely many $n$.

\begin{Th}
\label{thm11}
The space $2^{\omega_1}$ contains a dense H-separable subspace.
\end{Th}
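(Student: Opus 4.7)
The plan is to reduce the statement to Theorem~\ref{thm7} by a projection argument, exploiting the fact that $\omega_1 \le \mathfrak{c}$ in ZFC. Concretely, I would fix an injection $\omega_1 \hookrightarrow \mathfrak{c}$ and let $\pi : 2^{\mathfrak{c}} \to 2^{\omega_1}$ denote the associated coordinate projection. This map is a continuous, open, surjective homomorphism of product spaces, so it should push forward any well-behaved countable dense subspace of the bigger cube to one of the smaller cube.

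First, I would invoke Theorem~\ref{thm7} to obtain a countable dense H-separable subspace $X \subseteq 2^{\mathfrak{c}}$. Next, set $Y := \pi(X)$. The set $Y$ is countable, and it is dense in $2^{\omega_1}$ because any nonempty basic open cylinder $\{g \in 2^{\omega_1} : g \upharpoonright F = \sigma\}$ (with $F \subseteq \omega_1$ finite) lifts under $\pi$ to a nonempty basic open cylinder of $2^{\mathfrak{c}}$, which meets the dense set $X$; applying $\pi$ to such an intersection point yields a point of $Y$ in the original cylinder. Finally, the H-separability of $Y$ is obtained as the case $\mathcal{I} = \fin$ of Lemma~\ref{lemma3}: open continuous images of $\mathcal{I}$-H-separable spaces are $\mathcal{I}$-H-separable, and H-separability is exactly $\fin$-H-separability. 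Hence $Y = \pi(X)$ is a (countable) dense H-separable subspace of $2^{\omega_1}$, which is even stronger than what the theorem asks.

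I do not anticipate any real obstacle. The only conceptual point worth flagging is that we need the preservation statement for H-separability, not merely $\mathcal{I}$-H-separability, but this follows immediately by specializing Lemma~\ref{lemma3} to the ideal $\fin$; no new argument is required. The entire proof is therefore a one-line deduction from Theorem~\ref{thm7} together with Lemma~\ref{lemma3}.
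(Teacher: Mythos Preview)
Your argument has a genuine gap. The projection $\pi: 2^{\mathfrak c} \to 2^{\omega_1}$ is open and continuous, but Lemma~\ref{lemma3} requires the map whose \emph{domain} is the H-separable space to be open; that map is the restriction $\pi\!\upharpoonright_X : X \to Y$, and restrictions of open maps to subspaces---even dense ones---need not be open. For instance, let $E, F$ be disjoint countable dense subsets of $2^\omega$, fix $p \in 2^\omega$, and set $Z = (E \times E) \cup (F \times \{p\}) \subseteq 2^\omega \times 2^\omega$. Then $Z$ is countable and dense, yet for any nonempty basic open $V_1 \times V_2$ with $p \notin V_2$ one has $\pi\bigl((V_1 \times V_2) \cap Z\bigr) = V_1 \cap E$, which is not relatively open in $\pi(Z) = E \cup F$ because every neighbourhood of a point of $E$ meets $F$. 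Equivalently, for a dense $D_n \subseteq Y$ the set $\pi^{-1}(D_n) \cap X$ need not be dense in $X$, so there is no way to transport the sequence $(D_n)$ back to $X$ and exploit its H-separability. Your proposal offers no reason why the particular $X$ supplied by Theorem~\ref{thm7} should avoid this obstruction, and H-separability of $X$ alone does not force $\pi\!\upharpoonright_X$ to be open.

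The paper's proof does not pass through Theorem~\ref{thm7} at all. It argues by cases: if $\omega_1 < \mathfrak b$, then \cite[Corollary~30]{bella09} gives that every countable subspace of $2^{\omega_1}$ is already H-separable, so any countable dense subset works; if $\omega_1 = \mathfrak b$, then \cite[Theorem~10]{bartos06} supplies a zero-dimensional metrizable space $X$ of size $\mathfrak b$ all of whose finite powers are Hurewicz, and \cite[Proposition~44]{bella09} then shows that $C_p(X,2)$ is a dense H-separable subspace of $2^X \cong 2^{\omega_1}$.
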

\begin{proof}
Assume that $\omega_1 < \mathfrak{b}$. By \cite[Corollary 30]{bella09}, every countable subspace of $2^{\omega_1}$ is H-separable. This gives a countable dense H-separable subspace of $2^{\omega_1}$.

Next assume that $\omega_1 = \mathfrak{b}$. By \cite[Theorem 10]{bartos06}, there exists a zero-dimensional metrizable space $X$ with $|X| = \mathfrak{b}$ such that every finite power of $X$ is Hurewicz. Clearly $iw(X) = \omega$. By \cite[Proposition 44]{bella09}, $C_p(X, 2)$ is a H-separable subspace of $2^{\omega_1}$. Also $C_p(X, 2)$ is dense in $2^{\omega_1}$.
\end{proof}

\begin{Cor}
\label{cor9}
The space $2^{\omega_1}$ contains a countable dense $\mathcal{I}$-H-separable subspace.
\end{Cor}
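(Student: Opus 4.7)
The strategy is to apply Theorem~\ref{thm11} to extract a dense H-separable subspace of $2^{\omega_1}$ and then pass to a countable dense subspace of it, which will automatically inherit the required property via Lemma~\ref{lemma103}. All the ingredients are already in place; the only subtle point is that Theorem~\ref{thm11} does not a priori produce a \emph{countable} witness in its second case, where the H-separable subspace is the (possibly uncountable) space $C_p(X,2)$.

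First I would invoke Theorem~\ref{thm11} to obtain a dense H-separable subspace $D$ of $2^{\omega_1}$. Since H-separability implies M-separability, and applying M-separability to the constant sequence $Y_n = D$ yields finite sets $F_n \subseteq D$ whose union is a countable dense subset of $D$, we conclude that $D$ is separable. Let $E$ be any such countable dense subspace of $D$. By transitivity of density (since $E$ is dense in $D$ and $D$ is dense in $2^{\omega_1}$), $E$ is also dense in $2^{\omega_1}$.

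Finally I would use two facts already recorded in the paper: every H-separable space is $\mathcal{I}$-H-separable (noted in the paragraph immediately following Theorem~\ref{thm10}), and $\mathcal{I}$-H-separability is preserved under dense subspaces (Lemma~\ref{lemma103}). Together these give that $D$ is $\mathcal{I}$-H-separable, and hence so is its dense subspace $E$. Thus $E$ is the desired countable dense $\mathcal{I}$-H-separable subspace of $2^{\omega_1}$. No substantial obstacle is expected, since the corollary amounts to combining Theorem~\ref{thm11} with the observation that H-separability automatically produces a countable dense subspace to which Lemma~\ref{lemma103} can be applied.
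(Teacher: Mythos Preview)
Your argument is correct and matches what the paper intends: the corollary carries no separate proof in the paper, so it is meant to follow immediately from Theorem~\ref{thm11} together with the fact that an H-separable space is separable and $\mathcal{I}$-H-separable, and that $\mathcal{I}$-H-separability passes to dense subspaces (Lemma~\ref{lemma103}). One small correction: the implication ``H-separable $\Rightarrow$ $\mathcal{I}$-H-separable'' is recorded in the paragraph following Theorem~\ref{thm8}, not Theorem~\ref{thm10}.
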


\subsection{$\iwfs$, $\iwfsd$ and $C_p(X)$}
We now introduce the following notions. A space $X$ is said to be $\mathcal{I}$-weakly Fr\'{e}chet in the strict sense (in short, $\iwfs$) if for any $x\in X$ and any sequence $(Y_n)$ of subsets of $X$ with $x\in \cap_{n\in \mathbb{N}} \overline{Y_n}$ there exists a sequence $(F_n)$ such that for each $n$ $F_n$ is a finite subset of $Y_n$ and for every neighbourhood $U$ of $x$, $\{n\in \mathbb{N} : U\cap F_n = \emptyset\} \in \mathcal{I}$. $X$ is said to be $\mathcal{I}$-weakly Fr\'{e}chet in the strict sense with respect to dense subspaces (in short, $\iwfsd$) if for any $x\in X$ and any sequence $(Y_n)$ of dense subspaces of $X$ there exists a sequence $(F_n)$ such that for each $n$ $F_n$ is a finite subset of $Y_n$ and for every neighbourhood $U$ of $x$, $\{n\in \mathbb{N} : U\cap F_n = \emptyset\} \in \mathcal{I}$. If $X$ is $\iwfs$, then $X$ is $\iwfsd$. Also if $X$ is $\iwfs$ (respectively, $\iwfsd$), then $X$ has countable fan tightness (respectively, countable fan tightness with respect to dense subspaces).

It is obvious that if $X$ is $\mathcal{I}$-H-separable, then $X$ is $\iwfsd$. But an $\mathcal{I}$-H-separable space may not be $\iwfs$. Indeed, let $X$ be the countable Fr\'{e}chet-Urysohn fan space (see \cite[2.3.1]{arhan78}). Since $\pi w(X) = \omega$, by Theorem~\ref{thm1}, $X$ is $\mathcal{I}$-H-separable. On the other hand, $X$ does not have countable fan tightness and hence $X$ is not $\iwfs$. However, we show in Theorem~\ref{thm5} that if $C_p(X)$ is $\mathcal{I}$-H-separable, then $C_p(X)$ is $\iwfs$.

\begin{Prop}
\label{prop1}
A separable space $X$ is $\mathcal{I}$-H-separable if and only if it is $\iwfsd$.
\end{Prop}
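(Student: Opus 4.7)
The plan is to observe that one direction is free and concentrate on the other. If $X$ is $\mathcal{I}$-H-separable, then given any sequence $(Y_n)$ of dense subspaces the witnessing sequence $(F_n)$ already satisfies the $\iwfsd$ condition at every $x\in X$, simply because each neighborhood of $x$ is a nonempty open set in $X$; separability plays no role here. So the content reduces to showing that a separable $\iwfsd$ space is $\mathcal{I}$-H-separable.

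For the nontrivial direction, fix a countable dense set $D=\{x_k:k\in\mathbb{N}\}$ and a sequence $(Y_n)$ of dense subspaces of $X$. The idea is to apply the $\iwfsd$ property once per point of $D$ and diagonalize. For each $k$, use the $\iwfsd$ condition at $x_k$ with the sequence $(Y_n)$ to choose finite $F_n^{(k)}\subseteq Y_n$ such that $\{n\in\mathbb{N}:U\cap F_n^{(k)}=\emptyset\}\in\mathcal{I}$ for every neighborhood $U$ of $x_k$. Then set $F_n=\bigcup_{k\leq n}F_n^{(k)}$; each $F_n$ is a finite subset of $Y_n$.

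To check that $(F_n)$ witnesses the $\mathcal{I}$-H-separability, let $U\subseteq X$ be nonempty and open, and use density of $D$ to pick $k$ with $x_k\in U$, so $U$ is a neighborhood of $x_k$. For every $n\geq k$ one has $F_n^{(k)}\subseteq F_n$, whence
\[
\{n\geq k:U\cap F_n=\emptyset\}\subseteq\{n\in\mathbb{N}:U\cap F_n^{(k)}=\emptyset\}\in\mathcal{I}.
\]
The remaining indices lie in $\{1,\dots,k-1\}$, which is finite and therefore in $\mathcal{I}$ by admissibility, so $\{n\in\mathbb{N}:U\cap F_n=\emptyset\}\in\mathcal{I}$, as required.

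There is no genuine obstacle here; the argument is a standard diagonalization. The only step that needs a little care is the choice $F_n=\bigcup_{k\leq n}F_n^{(k)}$: taking the full union $\bigcup_{k\in\mathbb{N}} F_n^{(k)}$ could be infinite, while taking any fixed finite union would miss the dense points $x_k$ with large index. The cumulative finite unions guarantee that for each $k$, eventually $F_n^{(k)}\subseteq F_n$, which is precisely what transfers the $\iwfsd$ control at $x_k$ into $\mathcal{I}$-H-separability control for every nonempty open $U$ meeting $D$, i.e.\ for every nonempty open $U$.
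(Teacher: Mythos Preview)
Your proof is correct and follows essentially the same approach as the paper: both fix a countable dense set, apply $\iwfsd$ at each point to obtain families $F_n^{(k)}$, and diagonalize via $F_n=\bigcup_{k\leq n}F_n^{(k)}$. Your write-up is slightly more explicit about why the initial segment $\{1,\dots,k-1\}$ is absorbed by admissibility of $\mathcal{I}$, but the argument is otherwise identical.
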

\begin{proof}
Suppose that $X$ is $\iwfsd$. Let $(Y_n)$ be a sequence of dense subspaces of $X$. Since $X$ is separable, we have a countable dense subspace $Y = \{x_n : n\in \mathbb{N}\}$ of $X$. Fix $n\in \mathbb{N}$. Since $X$ is $\iwfsd$, we get a sequence $(F_m^{(n)} : m\in \mathbb{N})$ such that for each $m\in \mathbb{N}$, $F_m^{(n)}$ is a finite subset of $Y_m$ and for each neighbourhood $U$ of $x_n$, $\{m\in \mathbb{N} : U\cap F_m^{(n)} = \emptyset\} \in \mathcal{I}$. For each $n$ let $F_n = \cup_{i\leq n} F_n^{(i)}$. We claim that the sequence $(F_n)$ witnesses for $(Y_n)$ that $X$ is $\mathcal{I}$-H-separable. Let $U$ be a nonempty open set of $X$. Now we can choose a $k\in \mathbb{N}$ such that $x_k \in U \cap Y$. This gives a $\{m\in \mathbb{N} : U\cap F_m^{(k)} = \emptyset\} \in \mathcal{I}$. Since for each $m \geq k$, $F_m^{(k)} \subseteq F_m$, we get $\{m\in \mathbb{N} : U\cap F_m = \emptyset\} \in \mathcal{I}$. Thus $X$ is $\mathcal{I}$-H-separable.

The other direction is trivial.
\end{proof}

\begin{Lemma}
\label{lemma1}
\hfill
\begin{enumerate}[wide=0pt,label={\upshape(\arabic*)},leftmargin=*,ref={\theLemma(\arabic*)}]
  \item\label{lemma101} The $\iwfs$ property is hereditary.
  \item\label{lemma102} The $\iwfsd$ property is preserved under dense subspaces.
\end{enumerate}
\end{Lemma}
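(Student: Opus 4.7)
My plan is to establish both parts by direct verification, exploiting the standard identities relating closures and neighbourhoods in a subspace to those in the ambient space.

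For part (1), I would fix a subspace $Z\subseteq X$ of an $\iwfs$ space $X$, and take an arbitrary $x\in Z$ together with a sequence $(Y_n)$ of subsets of $Z$ satisfying $x\in\bigcap_{n\in\mathbb{N}}\overline{Y_n}^Z$. Using the identity $\overline{Y_n}^Z=\overline{Y_n}^X\cap Z$, I would deduce $x\in\overline{Y_n}^X$ for every $n$, and then invoke the $\iwfs$ property of $X$ applied to the pair $(x,(Y_n))$ to obtain a witnessing sequence $(F_n)$ with $F_n$ a finite subset of $Y_n$ and $\{n\in\mathbb{N}:U\cap F_n=\emptyset\}\in\mathcal{I}$ for every neighbourhood $U$ of $x$ in $X$. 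Since $F_n\subseteq Y_n\subseteq Z$, and every neighbourhood $V$ of $x$ in $Z$ is of the form $U\cap Z$ for some neighbourhood $U$ of $x$ in $X$, we have $V\cap F_n=U\cap F_n$. Consequently $\{n\in\mathbb{N}:V\cap F_n=\emptyset\}=\{n\in\mathbb{N}:U\cap F_n=\emptyset\}\in\mathcal{I}$, showing that $(F_n)$ also witnesses $\iwfs$ for $Z$.

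For part (2), let $D$ be a dense subspace of an $\iwfsd$ space $X$. Given $x\in D$ and a sequence $(Y_n)$ of dense subspaces of $D$, I would first observe that density is transitive along dense subspaces, so each $Y_n$ is dense in $X$ as well. Applying the $\iwfsd$ property of $X$ to $x\in X$ and the sequence $(Y_n)$ (now viewed as dense in $X$) yields a sequence $(F_n)$ of finite subsets with $F_n\subseteq Y_n$ and $\{n\in\mathbb{N}:U\cap F_n=\emptyset\}\in\mathcal{I}$ for every neighbourhood $U$ of $x$ in $X$. The same neighbourhood-restriction argument as in part (1) then transfers this conclusion to neighbourhoods of $x$ in $D$, completing the proof.

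The main obstacle is essentially bookkeeping; there is no serious technical difficulty. The only points requiring care are the two routine identities (the subspace closure formula and the correspondence between neighbourhoods in a subspace and their ambient counterparts), together with the trivial remark that equality of sets preserves membership in $\mathcal{I}$. I do not attempt to strengthen part (2) to the hereditary property, because an arbitrary subspace of $D$ need not inherit its dense subspaces from those of $X$, and this is precisely why only dense subspaces are claimed.
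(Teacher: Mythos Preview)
Your proof is correct; the paper states this lemma without proof, treating it as routine, and your direct verification via the subspace closure identity and the correspondence between subspace and ambient neighbourhoods is exactly the expected argument. There is nothing to add.
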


Let $X$ be a space and $k\in \mathbb{N}$. Let $\mathcal{U}$ be an open cover of $X^k$. Recall that a family $\mathcal{B}$ of subsets of $X$ is called $\mathcal{U}$-small if for every $V_1, V_2, \dotsc, V_k \in \mathcal{V}$ there exists a $U \in \mathcal{U}$ such that $V_1\times V_2\times \cdots \times V_k \subseteq U$.

We closely follow the technique of the proof of \cite[II.2.2. Theorem]{arhan92} to prove the next result.
\begin{Th}
\label{thm3}
For a space $X$ the following assertions are equivalent.
\begin{enumerate}[wide=0pt,label={\upshape(\arabic*)},leftmargin=*]
  \item $C_p(X)$ is $\iwfs$.
  \item $C_p(X)$ is $\iwfsd$.
  \item Every finite power of $X$ is $\mathcal{I}$-Hurewicz.
\end{enumerate}
\end{Th}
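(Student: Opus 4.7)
The plan is to establish the cycle $(1)\Rightarrow(2)\Rightarrow(3)\Rightarrow(1)$, following the template of Arhangel'skii \cite[II.2.2]{arhan92} (and the H-separable analogue of Bella et al.~\cite{bella09}) with ``modulo $\mathcal{I}$'' replacing ``all but finitely many''. The implication $(1)\Rightarrow(2)$ is immediate, since every $f\in C_p(X)$ lies in the closure of each dense subspace of $C_p(X)$, so $\iwfs$ applied to a sequence of dense subspaces produces exactly what $\iwfsd$ demands.

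For $(3)\Rightarrow(1)$, suppose every finite power of $X$ is $\mathcal{I}$-Hurewicz. Let $(Y_n)$ have a common accumulation point in $C_p(X)$; by translation assume this point is $\underbar{0}$. For $g\in C(X)$ and $k\in\mathbb{N}$ set $V(g,k)=\{x\in X:|g(x)|<1/k\}$. Since $\underbar{0}\in\overline{Y_n}$, for every finite $A\subseteq X$ some $g\in Y_n$ satisfies $A\subseteq V(g,k)$, so for each pair $(k,\ell)$ the family $\mathcal{W}_{n,k,\ell}=\{V(g,k)^\ell:g\in Y_n\}$ is an open cover of $X^\ell$. Applying the $\mathcal{I}$-Hurewicz property of $X^\ell$ to $(\mathcal{W}_{n,k,\ell})_n$ yields finite $F^{(k,\ell)}_n\subseteq Y_n$ such that for every $(x_1,\ldots,x_\ell)\in X^\ell$, $\{n:(x_1,\ldots,x_\ell)\notin\bigcup_{g\in F^{(k,\ell)}_n}V(g,k)^\ell\}\in\mathcal{I}$. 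Setting $F_n=\bigcup_{k+\ell\leq n}F^{(k,\ell)}_n$, a basic neighbourhood $U=B(\underbar{0},A,1/k)$ of $\underbar{0}$ with $A=\{a_1,\ldots,a_\ell\}$ satisfies $\{n:U\cap F_n=\emptyset\}\subseteq\{n<k+\ell\}\cup\{n:(a_1,\ldots,a_\ell)\notin\bigcup_{g\in F^{(k,\ell)}_n}V(g,k)^\ell\}\in\mathcal{I}$, because any $g\in F^{(k,\ell)}_n\subseteq F_n$ with $A\subseteq V(g,k)$ automatically lies in $U\cap F_n$.

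For $(2)\Rightarrow(3)$, fix $\ell$ and a sequence of open covers $(\mathcal{U}_n)$ of $X^\ell$. After closing each $\mathcal{U}_n$ under finite unions I may assume it is an $\omega$-cover of $X^\ell$. Define
\[Y_n=\{\,f\in C(X):\{x\in X:f(x)\neq 0\}^\ell\subseteq U\text{ for some }U\in\mathcal{U}_n\,\}.\]
To show $Y_n$ is dense in $C_p(X)$, given a basic open $B(h,A,\epsilon)$, invoke the cubical refinement lemma (the $\omega$-cover property yields an open $V\subseteq X$ with $A\subseteq V$ and $V^\ell\subseteq U$ for some $U\in\mathcal{U}_n$), then by Tychonoff separation build $f\in C(X)$ matching $h$ on $A$ and vanishing off $V$; this $f$ lies in $Y_n\cap B(h,A,\epsilon)$. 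Apply $\iwfsd$ at $\underbar{1}$ to $(Y_n)$ to obtain finite $F_n\subseteq Y_n$ with $\{n:B(\underbar{1},A,1)\cap F_n=\emptyset\}\in\mathcal{I}$ for every finite $A\subseteq X$. For each $f\in F_n$ fix $U_f\in\mathcal{U}_n$ witnessing $\{x:f(x)\neq 0\}^\ell\subseteq U_f$, and set $\mathcal{W}_n=\{U_f:f\in F_n\}$. For $\vec{x}=(x_1,\ldots,x_\ell)\in X^\ell$ with $A=\{x_1,\ldots,x_\ell\}$: if some $f\in F_n\cap B(\underbar{1},A,1)$ exists then $|f(x_i)-1|<1$, so $f(x_i)\neq 0$ for each $i$, hence $\vec{x}\in\{x:f(x)\neq 0\}^\ell\subseteq U_f\subseteq\bigcup\mathcal{W}_n$. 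Therefore $\{n:\vec{x}\notin\bigcup\mathcal{W}_n\}\subseteq\{n:B(\underbar{1},A,1)\cap F_n=\emptyset\}\in\mathcal{I}$, establishing $\mathcal{I}$-Hurewicz of $X^\ell$.

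The main obstacle will be the cubical refinement lemma invoked in the density step of $(2)\Rightarrow(3)$: given an $\omega$-cover $\mathcal{U}$ of $X^\ell$ and finite $A\subseteq X$, one must produce open $V\supseteq A$ in $X$ with $V^\ell\subseteq U$ for some $U\in\mathcal{U}$. My plan is to pick $U\supseteq A^\ell$, select for each $\vec{a}\in A^\ell$ a rectangular neighbourhood $R_{\vec{a}}=R_1^{\vec{a}}\times\cdots\times R_\ell^{\vec{a}}\subseteq U$, let $V_a=\bigcap\{R_i^{\vec{a}}:\vec{a}\in A^\ell,\ \vec{a}_i=a\}$ for each $a\in A$, and take $V=\bigcup_{a\in A}V_a$; a short check confirms $V^\ell\subseteq U$ by routing each mixed product through the rectangle $R_{(a_1,\ldots,a_\ell)}$ corresponding to its coordinates. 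All remaining steps are routine diagonal bookkeeping in the ideal setting.
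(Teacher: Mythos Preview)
Your proposal is correct and follows essentially the same Arhangel'ski\u{i} template as the paper's proof, with only cosmetic rearrangements: in $(3)\Rightarrow(1)$ you exploit the homogeneity of $C_p(X)$ to translate to $\underbar{0}$ and form the covers $\{V(g,k)^\ell:g\in Y_n\}$ directly (the paper works at a general $f$ and selects witnesses $g_x^{(n,k)}$ pointwise before building the covers), and in $(2)\Rightarrow(3)$ you close each $\mathcal{U}_n$ under finite unions and invoke an explicit cubical-refinement lemma where the paper uses the equivalent language of finite $\mathcal{U}_n$-small families. The ``main obstacle'' you anticipate is exactly the density verification the paper leaves implicit, and your sketch of the cubical refinement (intersecting the rectangle coordinates over $A^\ell$ and taking the union) is correct.
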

\begin{proof}
$(2) \Rightarrow (3)$. Let $k\in \mathbb{N}$ be fixed. Let $(\mathcal{U}_n)$ be a sequence of open covers of $X^k$. For each $n$ let $\mathfrak{F}_n$ be the collection of all finite $\mathcal{U}_n$-small families of open sets of $X$. For each $\mathcal{U} \in \mathfrak{F}_n$ define $Z_n(\mathcal{U}) = \{f\in C_p(X) : f(X\setminus \cup \mathcal{U}) = \{0\}\}$. For each $n$ choose $Y_n = \cup \{Z_n(\mathcal{U}) : \mathcal{U}\in \mathfrak{F}_n\}$. Observe that for each $n$ $Y_n$ is dense in $C_p(X)$. Since $C_p(X)$ is $\iwfsd$, there exists a sequence $(F_n)$ such that for each $n$ $F_n$ is a finite subset of $Y_n$ and for every neighbourhood $U$ of $\underbar{1}$, $\{n\in \mathbb{N} : U\cap F_n = \emptyset\} \in \mathcal{I}$. For each $n$ and each $f\in F_n$ choose a $\mathcal{U}_f^{(n)} \in \mathfrak{F}_n$ such that $f\in Z_n(\mathcal{U}_f^{(n)})$. For every $V_1, V_2, \dotsc, V_k \in \mathcal{U}_f^{(n)}$ choose a $U_f^{(n)}(V_1, V_2, \dotsc, V_k) \in \mathcal{U}_n$ such that $V_1 \times V_2\times \dotsc V_k \subseteq U_f^{(n)}(V_1, V_2, \dotsc, V_k)$. For each $n$ $\mathcal{V}_n = \{U_f^{(n)}(V_1, V_2, \dotsc, V_k) : V_1, V_2, \dotsc, V_k \in \mathcal{U}_f^{(n)}, f\in F_n\}$ is a finite subset of $\mathcal{U}_n$. Observe that the sequence $(\mathcal{V}_n)$ witnesses for $(\mathcal{U}_n)$ that $X^k$ is $\mathcal{I}$-Hurewicz. Let $x = (x_1, x_2, \dotsc, x_k) \in X^k$. Pick the open set $U = \{f\in C_p(X) : f(x_i) > 0, 1\leq i \leq k\}$ of $C_p(X)$. Clearly $\underbar{1} \in U$. Then $\{n\in \mathbb{N} : U\cap F_n = \emptyset\} \in \mathcal{I}$. We claim that $\{n\in \mathbb{N} : x\notin \cup \mathcal{V}_n\} \subseteq \{n\in \mathbb{N} : U\cap F_n = \emptyset\}$. Pick a $m \in \{n\in \mathbb{N} : x\notin \cup \mathcal{V}_n\}$. Then $x \notin \cup \mathcal{V}_m$. If possible suppose that $U\cap F_m \neq \emptyset$. Pick a $f \in C_p(X)$ such that $f\in U\cap F_m$. Since $f\in Z_m(\mathcal{U}_f^{(m)})$, $f(X\setminus \cup \mathcal{U}_f^{(m)}) = \{0\}$. It follows that $f(x_i) > 0$ for all $1\leq i \leq k$ and $f(y) = 0$ for all $y\in X\setminus \cup \mathcal{U}_f^{(m)}$. For each $1\leq i \leq k$ choose a $V_i \in \mathcal{U}_f^{(m)}$ such that $x_i \in V_i$. So $x \in V_1\times V_2 \times \dotsc V_k$. Then $x\in U_f^{(m)}(V_1, V_2, \dotsc, V_k)$ as $V_1 \times V_2\times \dotsc V_k \subseteq U_f^{(m)}(V_1, V_2, \dotsc, V_k)$ and hence $x\in \cup \mathcal{V}_m$. Which is absurd. Consequently $U\cap F_m \neq \emptyset$, i.e. $m\in \{n\in \mathbb{N} : U\cap F_n = \emptyset\}$ and so $\{n\in \mathbb{N} : x\notin \cup \mathcal{V}_n\} \subseteq \{n\in \mathbb{N} : U\cap F_n = \emptyset\}$, i.e. $\{n\in \mathbb{N} : x\notin \cup \mathcal{V}_n\} \in \mathcal{I}$. Thus $X^k$ is $\mathcal{I}$-Hurewicz.

$(3) \Rightarrow (1)$. Let $f\in C_p(X)$ and $(Y_n)$ be a sequence of sets of $C_p(X)$ such that $f\in \cap_{n\in \mathbb{N}} \overline{Y_n}$. Let $k \in \mathbb{N}$ be fixed. Since $f\in \cap_{n\in \mathbb{N}} \overline{Y_n}$, for each $n\in \mathbb{N}$ and each $x = (x_1, x_2, \dotsc, x_k) \in X^k$ there exists a $g_x^{(n,k)} \in Y_n$ such that $|g_x^{(n,k)}(x_i) - f(x_i)| < \frac{1}{k}$ for all $1\leq i \leq k$. Since $g_x^{(n,k)}$ and $f$ are continuous, for each $1 \leq i \leq k$ we can choose an open set $U_{i,x}^{(n,k)}$ of $X$ containing $x_i$ such that $|g_x^{(n,k)}(y) - f(y)| < \frac{1}{k}$ for all $y \in U_{i,x}^{(n,k)}$. Then $U_x^{(n,k)} = U_{1,x}^{(n,k)}\times U_{2,x}^{(n,k)}\times \dotsc \times U_{k,x}^{(n,k)}$ is an open set in $X^k$ containing $x$ and $|g_x^{(n,k)}(y_i) - f(y_i)| < \frac{1}{k}$ for all $y = (y_1, y_1, \dotsc, y_k) \in U_x^{(n,k)}$. For each $n$ $\mathcal{U}_n^{(k)} = \{U_x^{(n,k)} : x\in X^k\}$ is an open cover of $X^k$. Apply the $\mathcal{I}$-Hurewicz property of $X^k$ to $(\mathcal{U}_n^{(k)})$ to obtain a sequence $(\mathcal{V}_n^{(k)})$ such that for each $n$ $\mathcal{V}_n^{(k)}$ is a finite subset of $\mathcal{U}_n^{(k)}$ and for each $x\in X^k$, $\{n\in \mathbb{N} : x\notin \cup \mathcal{V}_n^{(k)}\} \in \mathcal{I}$. We can obtain a sequence $(A_n^{(k)})$ of finite subsets of $X^k$ such that for each $n$ $\mathcal{V}_n^{(k)} = \{U_x^{(n,k)} : x\in A_n^{(k)}\}$. For each $n$ $F_n^{(k)} = \{g_x^{(n,k)} : x\in A_n^{(k)}\}$ is a finite subset of $Y_n$. We now define a sequence $(F_n)$ such that for each $n$ $F_n$ is a finite subset of $Y_n$ as $F_n = \cup_{k\leq n} F_n^{(k)}$. Observe that the sequence $(F_n)$ witnesses for $f$ and $(Y_n)$ that $C_p(X)$ is $\iwfs$. Let $U$ be an open set in $C_p(X)$ containing $f$. Then we get a finite subset $F = \{z_1, z_2, \dotsc, z_m\}$ of $X$ and an $\epsilon > 0$ such that $B(f,F, \epsilon) \subseteq U$. We can assume that $\frac{1}{m} < \epsilon$. Since $z = (z_1, z_2, \dotsc, z_m) \in X^m$, $\{n\in \mathbb{N} : z\notin \cup \mathcal{V}_n^{(m)}\} \in \mathcal{I}$. We claim that $\{n\in \mathbb{N} : B(f,F, \epsilon) \cap F_n = \emptyset\} \in \mathcal{I}$. It is enough to show that $\{n\geq m : B(f,F, \epsilon) \cap F_n = \emptyset\} \subseteq \{n\in \mathbb{N} : z\notin \cup \mathcal{V}_n^{(m)}\}$. Pick a $j \in \{n\geq m : B(f,F, \epsilon) \cap F_n = \emptyset\}$. Then $j \geq m$ and $B(f,F, \epsilon) \cap F_j = \emptyset$. It follows that $B(f,F, \epsilon) \cap F_j^{(i)} = \emptyset$ for all $i \leq j$, i.e. $B(f,F, \epsilon) \cap F_j^{(m)} = \emptyset$. It follows that for each $x \in A_j^{(m)}$, $g_x^{(j,m)} \notin B(f,F, \epsilon)$ and $|g_x^{(j,m)}(z_p) - f(z_p)| \geq \epsilon > \frac{1}{m}$ for some $1 \leq p \leq m$. Consequently $z \notin U_x^{(j,m)}$ for all $x \in A_j^{(m)}$, i.e. $z\notin \cup \mathcal{V}_j^{(m)}$ and $j \in \{n\in \mathbb{N} : z\notin \cup \mathcal{V}_n^{(m)}\}$. Thus $\{n\geq m : B(f,F, \epsilon) \cap F_n = \emptyset\} \subseteq \{n\in \mathbb{N} : z\notin \cup \mathcal{V}_n^{(m)}\}$ and $\{n\in \mathbb{N} : B(f,F, \epsilon) \cap F_n = \emptyset\} \in \mathcal{I}$, i.e. $\{n\in \mathbb{N} : U \cap F_n = \emptyset\} \in \mathcal{I}$. Hence $C_p(X)$ is $\iwfs$.
\end{proof}

\begin{Cor}
\label{cor3}
If every finite power of $X$ is $\mathcal{I}$-Hurewicz, then every separable subspace of $C_p(X)$ is $\mathcal{I}$-H-separable.
\end{Cor}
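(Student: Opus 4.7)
The plan is to chain together the three results already at hand: Theorem~\ref{thm3}, the hereditary property of $\iwfs$ from Lemma~\ref{lemma101}, and Proposition~\ref{prop1}. Suppose every finite power of $X$ is $\mathcal{I}$-Hurewicz. Then by the implication $(3)\Rightarrow (1)$ of Theorem~\ref{thm3}, the function space $C_p(X)$ itself is $\iwfs$.

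Next, let $Y$ be any separable subspace of $C_p(X)$. Since $\iwfs$ is hereditary (Lemma~\ref{lemma101}), $Y$ is also $\iwfs$. As noted in the paragraph introducing the two notions, $\iwfs$ implies $\iwfsd$, so $Y$ is $\iwfsd$ as well.

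Finally, since $Y$ is separable and $\iwfsd$, Proposition~\ref{prop1} yields that $Y$ is $\mathcal{I}$-H-separable, as required.

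There is essentially no obstacle here: the statement is a clean formal consequence of the three earlier results, and all that is required is to verify that the hypotheses line up (namely that separability of $Y$ is preserved by assumption, and that heredity of $\iwfs$ in Lemma~\ref{lemma101} applies to arbitrary subspaces rather than just dense or open ones). If any step deserves care, it would be to make sure that the implication "$\iwfs \Rightarrow \iwfsd$" transfers to the subspace $Y$ in the right way, but this follows because a dense subspace of $Y$ of course satisfies $x \in \overline{Y_n}$ for every $x\in Y$, so the $\iwfs$ witness immediately serves as an $\iwfsd$ witness.
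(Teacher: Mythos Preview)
Your proof is correct and follows essentially the same approach as the paper: apply Theorem~\ref{thm3} to obtain $\iwfs$ for $C_p(X)$, pass to the subspace $Y$ via Lemma~\ref{lemma101}, and conclude with Proposition~\ref{prop1}. The only difference is cosmetic: the paper compresses the final step by citing Proposition~\ref{prop1} directly (leaving the trivial implication $\iwfs\Rightarrow\iwfsd$ implicit), whereas you make it explicit.
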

\begin{proof}
Let $Y$ be a separable subspace of $C_p(X)$. Suppose that every finite power of $X$ is $\mathcal{I}$-Hurewicz. Then by Theorem~\ref{thm3}, $C_p(X)$ is $\iwfs$. Also by Lemma~\ref{lemma101}, $Y$ is $\iwfs$. Thus $Y$ is $\mathcal{I}$-H-separable (see Proposition~\ref{prop1}).
\end{proof}

\begin{Lemma}[{Folklore}]
\label{lemma2}
Every Lindel\"{o}f space of cardinality less than $\mathfrak{b}(\mathcal{I})$ is $\mathcal{I}$-Hurewicz.
\end{Lemma}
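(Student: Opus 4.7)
The plan is to translate the $\mathcal{I}$-Hurewicz game directly into a statement about $\mathcal{I}$-boundedness in $\mathbb{N}^\mathbb{N}$, exactly in the spirit of Theorem~\ref{thm1}. Let $X$ be Lindel\"of with $|X| < \mathfrak{b}(\mathcal{I})$, and let $(\mathcal{U}_n)$ be an arbitrary sequence of open covers of $X$. First I would invoke the Lindel\"of property to replace each $\mathcal{U}_n$ by a countable subcover; after passing to finite unions I may enumerate it as an increasing sequence $\mathcal{U}_n = \{U_n^{(m)} : m \in \mathbb{N}\}$ with $U_n^{(m)} \subseteq U_n^{(m+1)}$, so that membership in $\mathcal{U}_n$ is captured by a single well-defined index.

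Next, for each $x \in X$ define $f_x \in \mathbb{N}^\mathbb{N}$ by
\[
f_x(n) = \min\{m \in \mathbb{N} : x \in U_n^{(m)}\}.
\]
This produces a family $\{f_x : x \in X\} \subseteq \mathbb{N}^\mathbb{N}$ of cardinality $|X| < \mathfrak{b}(\mathcal{I})$. By the very definition of $\mathfrak{b}(\mathcal{I})$ as the least cardinality of an $\mathcal{I}$-unbounded family in $(\mathbb{N}^\mathbb{N}, \le_\mathcal{I})$, there exists $g \in \mathbb{N}^\mathbb{N}$ such that for every $x \in X$,
\[
\{n \in \mathbb{N} : f_x(n) > g(n)\} \in \mathcal{I}.
\]

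Finally I would set $\mathcal{V}_n = \{U_n^{(m)} : m \le g(n)\}$, which is a finite subfamily of $\mathcal{U}_n$. For any $x \in X$, whenever $n$ satisfies $f_x(n) \le g(n)$, by monotonicity $x \in U_n^{(f_x(n))} \subseteq \bigcup \mathcal{V}_n$. Therefore
\[
\{n \in \mathbb{N} : x \notin \textstyle\bigcup \mathcal{V}_n\} \subseteq \{n \in \mathbb{N} : f_x(n) > g(n)\} \in \mathcal{I},
\]
and since $\mathcal{I}$ is a hereditary ideal the left-hand side also lies in $\mathcal{I}$, witnessing the $\mathcal{I}$-Hurewicz property.

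There is essentially no obstacle here; the only minor point requiring care is the initial reduction to a countable increasing enumeration of each cover, which is harmless for open covers of a Lindel\"of space. Everything else is a direct application of the definition of $\mathfrak{b}(\mathcal{I})$, mirroring the bounding argument already used in the proof of Theorem~\ref{thm1}. This is presumably why the lemma is attributed to folklore.
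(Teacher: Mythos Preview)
Your proof is correct and follows essentially the same route as the paper's: reduce each cover to a countable one via Lindel\"ofness, encode membership by the functions $f_x$, apply the definition of $\mathfrak{b}(\mathcal{I})$ to obtain a dominating $g$, and select $\mathcal{V}_n=\{U_n^{(m)}:m\le g(n)\}$. The only cosmetic difference is your passage to an increasing enumeration, which is unnecessary (the containment $U_n^{(f_x(n))}\in\mathcal{V}_n$ already follows from $f_x(n)\le g(n)$ without any monotonicity) and would require a one-line remark that the finite unions can be unpacked back into finitely many members of the original $\mathcal{U}_n$; the paper simply skips this detour.
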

\begin{proof}
Let $X$ be a Lindel\"{o}f space with $|X| < \mathfrak{b}(\mathcal{I})$. Consider a sequence $(\mathcal{U}_n)$ of open covers of $X$. Since $X$ is Lindel\"{o}f, for each $n$ we can assume that $\mathcal{U}_n = \{U_m^{(n)} : m\in \mathbb{N}\}$. For each $x\in X$ choose a $f_x\in\mathbb{N}^\mathbb{N}$ such that $f_x(n) = \min \{m\in \mathbb{N} : x\in U_m^{(n)}\}$ for all $n$. Since $|\{f_x : x\in X\}| < \mathfrak{b} (\mathcal{I})$, there exists a $g\in \mathbb{N}^\mathbb{N}$ such that $\{n\in \mathbb{N} : f_x(n) > g(n)\} \in \mathcal{I}$ for all $x\in X$. For each $n$ let $\mathcal{V}_n = \{U_m^{(n)} : m\leq g(n)\}$. We claim that the sequence $(\mathcal{V}_n)$ witnesses for $(\mathcal{U}_n)$ that $X$ is $\mathcal{I}$-Hurewicz. Let $x\in X$. Pick a $k\in \{n\in \mathbb{N} : x\notin \cup \mathcal{V}_n\}$. So $x\notin U_m^{(k)}$ for all $m\leq g(k)$. It follows that $f_x(k) > g(k)$ because $x\in U_{f_x(k)}^{(k)}$. Thus $k\in \{n\in \mathbb{N} : f_x(n) > g(n)\}$ and $\{n\in \mathbb{N} : x\notin \cup \mathcal{V}_n\} \subseteq \{n\in \mathbb{N} : f_x(n) > g(n)\}$. Consequently $\{n\in \mathbb{N} : x\notin \cup \mathcal{V}_n\} \in \mathcal{I}$ and $X$ is $\mathcal{I}$-Hurewicz.
\end{proof}

\begin{Th}[{\cite{noble74} (see also \cite[I.1.5 Theorem]{arhan92})}]
\label{thm4}
For a space $X$, $iw(X) = d(C_p(X))$.
\end{Th}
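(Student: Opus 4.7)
My plan is to prove the two inequalities $iw(X) \le d(C_p(X))$ and $d(C_p(X)) \le iw(X)$ separately, in each case using the evaluation pairing between $X$ and $C_p(X)$.

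For $iw(X) \le d(C_p(X))$, I set $\kappa = d(C_p(X))$ and fix a dense $D \subseteq C_p(X)$ with $|D| = \kappa$. Define $e \colon X \to \mathbb{R}^D$ by $e(x)(f) = f(x)$; this is continuous since each coordinate $f \in D$ is. Let $\tau'$ denote the initial topology on $X$ induced by $D$. Then $\tau' \subseteq \tau$, $(X, \tau')$ is Tychonoff as a subspace of the Tychonoff space $\mathbb{R}^D$, and $w(X, \tau') \le w(\mathbb{R}^D) = |D| \cdot \aleph_0 = \kappa$. The subtle point is that $e$ is injective, i.e., that $D$ separates points: given distinct $x, y \in X$, Tychonoffness of $(X, \tau)$ produces some $f \in C(X)$ with $f(x) \ne f(y)$, and density of $D$ at the basic neighborhood $B(f, \{x, y\}, \epsilon)$ with $\epsilon < |f(x) - f(y)|/2$ yields a $g \in D$ still separating $x$ from $y$. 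Hence $iw(X) \le \kappa$.

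For the reverse inequality, I set $\kappa = iw(X)$ and fix a coarser Tychonoff topology $\tau' \subseteq \tau$ on $X$ with $w(X, \tau') = \kappa$. An embedding $(X, \tau') \hookrightarrow [0,1]^\kappa$ provides a family $\mathcal{F} = \{f_\alpha : \alpha < \kappa\} \subseteq C((X, \tau'), [0,1]) \subseteq C((X, \tau))$ that separates points of $X$. Let $A$ be the unital $\mathbb{Q}$-algebra generated by $\mathcal{F}$; then $|A| \le \kappa$. It remains to show that $A$ is dense in $C_p(X)$: for $g \in C(X)$ and a basic neighborhood $B(g, F, \epsilon)$ with $F = \{x_1, \dotsc, x_n\}$, the family $\mathcal{F}$ distinguishes $x_1, \dotsc, x_n$ using finitely many coordinates $f_{\alpha_1}, \dotsc, f_{\alpha_m}$. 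Multivariate Lagrange interpolation yields a polynomial $P \in \mathbb{R}[y_1, \dotsc, y_m]$ with $P(f_{\alpha_1}(x_i), \dotsc, f_{\alpha_m}(x_i)) = g(x_i)$ for every $i$, and approximating the coefficients of $P$ by rationals produces $\tilde{P}(f_{\alpha_1}, \dotsc, f_{\alpha_m}) \in A$ lying in $B(g, F, \epsilon)$. Therefore $d(C_p(X)) \le |A| \le \kappa$.

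The main obstacle I anticipate is the density step in the second direction: one has to argue carefully that \emph{polynomial} combinations (not merely linear combinations) of finitely many elements of $\mathcal{F}$ suffice, and that rational-coefficient approximation preserves the $\epsilon$-accuracy at the finitely many points of $F$. All other steps, namely the initial-topology construction, the embedding into $[0,1]^\kappa$, and the point separation via density, are routine applications of the Tychonoff hypothesis.
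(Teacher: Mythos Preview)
The paper does not supply its own proof of this theorem; it is quoted from Noble \cite{noble74} (cf.\ Arhangel'ski\u{i} \cite[I.1.5]{arhan92}) and used as a known fact. Your two-inequality argument is correct and is essentially the standard proof one finds in those references: the diagonal map into $\mathbb{R}^D$ for a dense $D\subseteq C_p(X)$ yields $iw(X)\le d(C_p(X))$, and the rational unital algebra generated by the coordinate functions of an embedding $(X,\tau')\hookrightarrow[0,1]^\kappa$ is dense in $C_p(X)$, giving the reverse inequality. Regarding the density step you flagged as the main obstacle: once the tuples $\bigl(f_{\alpha_1}(x_i),\dotsc,f_{\alpha_m}(x_i)\bigr)$ are pairwise distinct in $\mathbb{R}^m$, for each $i$ a product of affine forms vanishing at the other tuples but not at the $i$th furnishes a Lagrange-type basis, so polynomial interpolation at finitely many distinct points of $\mathbb{R}^m$ always succeeds; rational approximation of the finitely many coefficients then lands you in $A\cap B(g,F,\epsilon)$ exactly as you describe.
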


\begin{Th}
\label{thm5}
For a space $X$ the following assertions are equivalent.
\begin{enumerate}[wide=0pt,label={\upshape(\arabic*)},leftmargin=*]
  \item $C_p(X)$ is separable and $\iwfs$.
  \item $C_p(X)$ is separable and $\iwfsd$.
  \item $C_p(X)$ is $\mathcal{I}$-H-separable.
  \item $iw(X) = \omega$ and every finite power of $X$ is $\mathcal{I}$-Hurewicz.
\end{enumerate}
\end{Th}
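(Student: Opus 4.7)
The plan is to close the loop $(1)\Leftrightarrow(2)\Leftrightarrow(3)\Leftrightarrow(4)$ by assembling results already established in the paper, rather than reproving anything from scratch. The backbone is Theorem~\ref{thm3}, which packages the $C_p(X)$-theoretic equivalence between $\iwfs$, $\iwfsd$ and ``every finite power of $X$ is $\mathcal{I}$-Hurewicz'', together with Theorem~\ref{thm4}, which translates separability of $C_p(X)$ into $iw(X)=\omega$ via $d(C_p(X))=iw(X)$.

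First I would handle $(1)\Leftrightarrow(2)\Leftrightarrow(4)$. The implication $(1)\Rightarrow(2)$ is free, since $\iwfs$ trivially yields $\iwfsd$ (as observed in the paragraph introducing these notions). For $(2)\Rightarrow(4)$, invoke Theorem~\ref{thm3}, $(2)\Rightarrow(3)$ of that theorem, to conclude every finite power of $X$ is $\mathcal{I}$-Hurewicz, and use Theorem~\ref{thm4} to rewrite ``$C_p(X)$ separable'' as $iw(X)=\omega$. For $(4)\Rightarrow(1)$, apply Theorem~\ref{thm3}, $(3)\Rightarrow(1)$, to get $C_p(X)$ is $\iwfs$, while Theorem~\ref{thm4} again converts $iw(X)=\omega$ into separability of $C_p(X)$.

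It remains to insert (3) into the cycle, which I would do through the equivalence $(2)\Leftrightarrow(3)$. The direction $(2)\Rightarrow(3)$ is exactly the content of Proposition~\ref{prop1}: a separable $\iwfsd$ space is $\mathcal{I}$-H-separable. For $(3)\Rightarrow(2)$, recall from the discussion just before Proposition~\ref{prop1} that $\mathcal{I}$-H-separability implies $\iwfsd$; and because every $\mathcal{I}$-H-separable space is M-separable (hence separable, as M-separable spaces are countably dense in themselves by the very definition applied to the constant sequence $Y_n=X$), $C_p(X)$ is separable. This yields $(2)$.

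The argument is essentially bookkeeping among previously proved items, so there is no genuine obstacle, only the care needed at one spot: verifying that $\mathcal{I}$-H-separability of $C_p(X)$ really does force separability. I would make this explicit by noting that applying Definition~\ref{def1} to the constant sequence $Y_n = C_p(X)$ produces a countable set $\bigcup_n F_n$ which meets every nonempty open set (since $\mathcal{I}$ is proper, the exceptional index set is a proper subset of $\mathbb{N}$, so some $F_n$ meets each open set), hence is dense; thereafter everything chains cleanly through Propositions~\ref{prop1} and Theorems~\ref{thm3}, \ref{thm4}.
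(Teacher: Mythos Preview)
Your proposal is correct and takes essentially the same approach as the paper: both arguments are bookkeeping among Theorem~\ref{thm3}, Theorem~\ref{thm4}, Proposition~\ref{prop1}, and the trivial implications $\iwfs\Rightarrow\iwfsd$ and $\mathcal{I}$-H-separable $\Rightarrow\iwfsd$. The only cosmetic difference is that the paper runs the cycle $(1)\Rightarrow(2)\Rightarrow(3)\Rightarrow(4)\Rightarrow(1)$ and invokes Corollary~\ref{cor3} for $(2)\Rightarrow(3)$, whereas you appeal to Proposition~\ref{prop1} directly---arguably a shade more streamlined---and you make explicit the separability step in $(3)\Rightarrow(2)$ that the paper leaves implicit.
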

\begin{proof}
$(1) \Rightarrow (2)$ is trivial as $\iwfs$ implies $\iwfsd$.

$(2) \Rightarrow (3)$. By Theorem~\ref{thm3}, every finite power of $X$ is $\mathcal{I}$-Hurewicz. Also by Corollary~\ref{cor3}, $C_p(X)$ is $\mathcal{I}$-H-separable.

$(3) \Rightarrow (4)$. Clearly $C_p(X)$ is $\iwfsd$. By Theorem~\ref{thm3}, every finite power of $X$ is $\mathcal{I}$-Hurewicz. Since $C_p(X)$ is separable, $d(C_p(X)) = \omega$. By Theorem~\ref{thm4}, $iw(X) = \omega$.

$(4) \Rightarrow (1)$. By Theorem~\ref{thm3}, $C_p(X)$ $\iwfs$. Since $iw(X) = \omega$, by Theorem~\ref{thm4}, $d(C_p(X)) = \omega$. It follows that $C_p(X)$ is separable.
\end{proof}

\begin{Cor}
\label{cor4}
If $C_p(X)$ is $\mathcal{I}$-H-separable, then for each $n \in \mathbb{N}$, $C_p(X^n)$ is $\mathcal{I}$-H-separable, and $(C_p(X))^\omega$ is $\mathcal{I}$-H-separable.
\end{Cor}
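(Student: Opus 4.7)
The plan is to apply Theorem~\ref{thm5} as a black box in both directions. That theorem says that for any space $Y$, $C_p(Y)$ is $\mathcal{I}$-H-separable if and only if $iw(Y) = \omega$ and every finite power of $Y$ is $\mathcal{I}$-Hurewicz. The hypothesis that $C_p(X)$ is $\mathcal{I}$-H-separable therefore supplies $iw(X) = \omega$ and the fact that $X^k$ is $\mathcal{I}$-Hurewicz for every $k \in \mathbb{N}$. It suffices to verify these two conditions for the spaces $Y = X^n$ and $Y = \bigoplus_{m\in\mathbb{N}} X_m$ (topological sum of countably many copies of $X$), because $C_p$ of the latter is homeomorphic to $(C_p(X))^\omega$.

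For $Y = X^n$, every finite power $(X^n)^k = X^{nk}$ is itself a finite power of $X$ and hence $\mathcal{I}$-Hurewicz by hypothesis. If $\tau'$ is a coarser Tychonoff topology on $X$ with $w((X, \tau')) = \omega$, then the $n$-fold product $(\tau')^n$ is a coarser Tychonoff topology on $X^n$ of countable weight, so $iw(X^n) = \omega$. Theorem~\ref{thm5} then yields that $C_p(X^n)$ is $\mathcal{I}$-H-separable.

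For $(C_p(X))^\omega$, I would first invoke the standard identification $(C_p(X))^\omega \cong C_p(Y)$ with $Y = \bigoplus_{m\in\mathbb{N}} X_m$ and each $X_m = X$: a continuous real-valued function on a topological sum splits as a sequence of continuous functions on the summands, and the pointwise convergence topologies match because every finite subset of $Y$ meets only finitely many summands. Taking the topological sum of coarser countable-weight Tychonoff topologies gives $iw(Y) = \omega$, and $Y^k$ decomposes as a countable topological sum of copies of $X^k$. The crux is then a countable-sum lemma: if $(W_m)_{m \in \mathbb{N}}$ are $\mathcal{I}$-Hurewicz spaces, then so is $\bigoplus_m W_m$. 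Given open covers $(\mathcal{U}_n)$ of $\bigoplus_m W_m$, for each $m$ I would apply the $\mathcal{I}$-Hurewicz property of $W_m$ to the restricted covers $\mathcal{U}_n \upharpoonright W_m$ to obtain finite families $\mathcal{V}_n^{(m)} \subseteq \mathcal{U}_n$ such that $\{n \in \mathbb{N} : x \notin \cup \mathcal{V}_n^{(m)}\} \in \mathcal{I}$ for every $x \in W_m$, and then set $\mathcal{V}_n = \bigcup_{m \leq n} \mathcal{V}_n^{(m)}$. For $x \in W_m$ the bad index set is contained in $\{1, 2, \dotsc, m-1\} \cup \{n \in \mathbb{N} : x \notin \cup \mathcal{V}_n^{(m)}\}$, which belongs to $\mathcal{I}$ by admissibility. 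With $Y^k$ confirmed $\mathcal{I}$-Hurewicz, Theorem~\ref{thm5} delivers that $(C_p(X))^\omega \cong C_p(Y)$ is $\mathcal{I}$-H-separable. The only mildly delicate point is this countable-sum lemma, which is a routine diagonal-shift adaptation of the classical Hurewicz counterpart.
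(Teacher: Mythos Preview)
Your argument for $C_p(X^n)$ is essentially identical to the paper's: both extract $iw(X)=\omega$ and ``every finite power of $X$ is $\mathcal{I}$-Hurewicz'' from Theorem~\ref{thm5}, observe that these pass to $Y=X^n$, and feed back into Theorem~\ref{thm5}.

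For $(C_p(X))^\omega$ the paper's written proof actually stops after the $C_p(X^n)$ case and supplies no argument, so here you are doing more than the paper does. Your route is correct: the identification $(C_p(X))^\omega \cong C_p\bigl(\bigoplus_{m\in\mathbb{N}} X\bigr)$ is standard, the $i$-weight computation for the sum is immediate, and the countable-sum lemma for the $\mathcal{I}$-Hurewicz property goes through exactly as you sketch once one lifts the restricted finite selections back into $\mathcal{U}_n$ (the bad set for $x\in W_m$ is then contained in $\{1,\dots,m-1\}\cup\{n: x\notin\cup\mathcal{V}_n^{(m)}\}\in\mathcal{I}$). A slightly shorter alternative, closer in spirit to the surrounding results, is to note that $(C_p(X))^n\cong C_p\bigl(\bigsqcup_{i=1}^n X\bigr)$ is $\mathcal{I}$-H-separable by the very same Theorem~\ref{thm5} calculation (only a \emph{finite} sum of $\mathcal{I}$-Hurewicz spaces is needed, which is trivial) and then invoke Theorem~\ref{thm13} to pass to the countable product. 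This trades your countable-sum lemma for an appeal to Theorem~\ref{thm13}; either way the second claim requires an extra step that the paper's proof leaves out.
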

\begin{proof}
Let $n \in \mathbb{N}$ and $Y = X^n$. Since $C_p(X)$ is $\mathcal{I}$-H-separable, $iw(X) = \omega$ and every finite power of $X$ is $\mathcal{I}$-Hurewicz. It follows that every finite power of $Y$ is also $\mathcal{I}$-Hurewicz and $iw(Y) = \omega$. Thus $C_p(Y)$ is $\mathcal{I}$-H-separable
\end{proof}

\begin{Cor}
\label{cor5}
If $X$ is a second countable space such that $C_p(X)$ is $\mathcal{I}$-H-separable, then $C_p(X)$ is hereditarily $\mathcal{I}$-H-separable.
\end{Cor}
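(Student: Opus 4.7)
The plan is to combine three facts already established in the paper: that $C_p(X)$ is hereditarily separable when $X$ is separable metrizable (noted in the Preliminaries), that $\mathcal{I}$-H-separability of $C_p(X)$ forces every finite power of $X$ to be $\mathcal{I}$-Hurewicz (Theorem~\ref{thm5}), and that the latter condition makes every separable subspace of $C_p(X)$ automatically $\mathcal{I}$-H-separable (Corollary~\ref{cor3}).

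Concretely, I would first observe that since $X$ is second countable, $X$ is separable metrizable, so the Preliminaries remark applies and $C_p(X)$ is hereditarily separable. In particular, every subspace $Y \subseteq C_p(X)$ is separable. Next, I would invoke Theorem~\ref{thm5} on the hypothesis that $C_p(X)$ is $\mathcal{I}$-H-separable to conclude that every finite power of $X$ is $\mathcal{I}$-Hurewicz. Finally, with this condition on $X$, Corollary~\ref{cor3} guarantees that every separable subspace of $C_p(X)$ is $\mathcal{I}$-H-separable. Combining these two observations, every subspace of $C_p(X)$ is $\mathcal{I}$-H-separable, which is precisely the assertion that $C_p(X)$ is hereditarily $\mathcal{I}$-H-separable.

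There is essentially no obstacle here, because all the heavy lifting has already been done in Theorem~\ref{thm5} and Corollary~\ref{cor3}; the only ingredient specific to this corollary is second countability, and its sole role is to upgrade separability of $C_p(X)$ to hereditary separability, which is exactly what lets us apply Corollary~\ref{cor3} to an arbitrary (not a priori separable) subspace. Alternatively, one can route the argument through Lemma~\ref{prop2}: hereditary separability of $C_p(X)$ makes every countable subspace separable and hence (via Corollary~\ref{cor3}) $\mathcal{I}$-H-separable, and Lemma~\ref{prop2} then yields hereditary $\mathcal{I}$-H-separability. Either phrasing gives a short proof of at most a few lines.
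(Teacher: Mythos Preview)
Your proposal is correct and essentially matches the paper's own proof: the paper also uses second countability to get hereditary separability of $C_p(X)$, then uses Theorem~\ref{thm5} to extract the $\iwfs$ property (equivalently, the $\mathcal{I}$-Hurewicz condition on finite powers), and finishes via Lemma~\ref{lemma101} and Proposition~\ref{prop1}, which is exactly what Corollary~\ref{cor3} packages. The only cosmetic difference is that you cite Corollary~\ref{cor3} as a black box while the paper reproves its content inline.
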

\begin{proof}
Let $Y$ be a subspace of $C_p(X)$. Since $X$ is a second countable space, it is separable metrizable. It follows that $C_p(X)$ is hereditarily separable and hence $Y$ is separable. Since $C_p(X)$ is $\mathcal{I}$-H-separable, $C_p(X)$ is $\iwfs$. By Lemma~\ref{lemma101}, $Y$ is $\iwfs$ and so $\iwfsd$. Also by Proposition~\ref{prop1}, $Y$ is $\mathcal{I}$-H-separable. Thus $C_p(X)$ is hereditarily $\mathcal{I}$-H-separable.
\end{proof}

\begin{Cor}
\label{cor6}
If $X$ is a second countable space with $|X|< \mathfrak{b}(\mathcal{I})$, then $C_p(X)$ is hereditarily $\mathcal{I}$-H-separable.
\end{Cor}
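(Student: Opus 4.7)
The plan is to reduce Corollary~\ref{cor6} to the already established Corollary~\ref{cor5}, so the only real work is to verify that $C_p(X)$ is $\mathcal{I}$-H-separable. By Corollary~\ref{cor5}, once we know that $C_p(X)$ is $\mathcal{I}$-H-separable, hereditarity follows automatically from second countability. Hence everything boils down to checking the two conditions in Theorem~\ref{thm5}(4): namely that $iw(X) = \omega$ and that every finite power of $X$ is $\mathcal{I}$-Hurewicz.

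First I would verify $iw(X) = \omega$. Since $X$ is second countable, $w(X) = \omega$, and because $iw(X) \leq w(X)$ always holds, the $i$-weight condition is immediate.

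Next I would handle the $\mathcal{I}$-Hurewicz condition on finite powers. Fix $n\in \mathbb{N}$. Since $X$ is second countable, so is $X^n$; in particular $X^n$ is Lindel\"{o}f. For the cardinality, if $X$ is finite then $X^n$ is finite and the $\mathcal{I}$-Hurewicz property is trivial; otherwise $|X|$ is an infinite cardinal $\kappa < \mathfrak{b}(\mathcal{I})$, and $|X^n| = \kappa^n = \kappa < \mathfrak{b}(\mathcal{I})$. Applying Lemma~\ref{lemma2}, every such Lindel\"{o}f space of cardinality less than $\mathfrak{b}(\mathcal{I})$ is $\mathcal{I}$-Hurewicz, so $X^n$ is $\mathcal{I}$-Hurewicz for every $n$.

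Combining these two observations with Theorem~\ref{thm5} yields that $C_p(X)$ is $\mathcal{I}$-H-separable. Finally, since $X$ is second countable, Corollary~\ref{cor5} upgrades this to hereditary $\mathcal{I}$-H-separability of $C_p(X)$. I do not foresee any genuine obstacle here; the argument is essentially a bookkeeping exercise assembling Theorem~\ref{thm5}, Lemma~\ref{lemma2} and Corollary~\ref{cor5}. The only mildly delicate point is ensuring the cardinality bound $|X^n| < \mathfrak{b}(\mathcal{I})$, which relies on the elementary fact that $\kappa^n = \kappa$ for infinite $\kappa$ and finite $n$.
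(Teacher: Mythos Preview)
Your proof is correct and follows essentially the same approach as the paper: both use second countability to get Lindel\"{o}fness of finite powers, apply Lemma~\ref{lemma2} with $|X^n|<\mathfrak{b}(\mathcal{I})$ to obtain the $\mathcal{I}$-Hurewicz property, and then deduce hereditary $\mathcal{I}$-H-separability via the $\iwfs$ machinery. The only cosmetic difference is that you factor through Theorem~\ref{thm5} and Corollary~\ref{cor5}, whereas the paper reproves the content of Corollary~\ref{cor5} inline by taking an arbitrary subspace and invoking Proposition~\ref{prop1} directly.
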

\begin{proof}
Let $Y$ be a subspace of $C_p(X)$. Since $X$ is a second countable space, it is separable metrizable. It follows that $C_p(X)$ is hereditarily separable and hence $Y$ is separable. Also every finite power of $X$ is second countable and so every finite power of $X$ is Lindel\"{o}f. Since $|X|< \mathfrak{b}(\mathcal{I})$, by Lemma~\ref{lemma2}, every finite power of $X$ is $\mathcal{I}$-Hurewicz. It follows that $C_p(X)$ is $\iwfs$ as $iw(X) = d(C_p(X)) = \omega$. Then $Y$ is $\iwfs$ and so $\iwfsd$. Also by Proposition~\ref{prop1}, $Y$ is $\mathcal{I}$-H-separable. Thus $C_p(X)$ is hereditarily $\mathcal{I}$-H-separable.
\end{proof}

\begin{Prop}
\label{prop8}
The space $C_p(C_p(X))$ is $\mathcal{I}$-H-separable if and only if $X$ is finite.
\end{Prop}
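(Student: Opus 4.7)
My plan is to apply Theorem~\ref{thm5} in both directions, with $C_p(X)$ taking the role of the ``$X$'' in the statement of that theorem.

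For the direction $(\Leftarrow)$: if $X$ is a finite set of cardinality $n$, then $C_p(X)$ is homeomorphic to the Euclidean space $\mathbb{R}^n$. Every finite power $(\mathbb{R}^n)^k = \mathbb{R}^{nk}$ is $\sigma$-compact and hence $\mathcal{I}$-Hurewicz, while $iw(\mathbb{R}^n) = \omega$ because $\mathbb{R}^n$ is second countable. Theorem~\ref{thm5} then yields that $C_p(C_p(X)) = C_p(\mathbb{R}^n)$ is $\mathcal{I}$-H-separable.

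For the direction $(\Rightarrow)$: assume $C_p(C_p(X))$ is $\mathcal{I}$-H-separable. Theorem~\ref{thm5} gives that every finite power of $C_p(X)$, in particular $C_p(X)$ itself, is $\mathcal{I}$-Hurewicz. I argue by contradiction, supposing $X$ is infinite. Using Hausdorffness I recursively choose distinct points $(x_n)_{n\in\mathbb{N}} \subseteq X$ together with pairwise disjoint open neighborhoods $U_n \ni x_n$; by complete regularity I then pick $f_n \in C(X, [0,1])$ with $f_n(x_n) = 1$ and $f_n \equiv 0$ on $X \setminus U_n$, so that $f_n(x_m) = 0$ for $m \neq n$. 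For each $n$, form the open cover $\mathcal{U}_n = \{O_{k,n} : k \in \mathbb{N}\}$ of $C_p(X)$, where $O_{k,n} = \{g \in C_p(X) : g(x_n) < k\}$. Applying the $\mathcal{I}$-Hurewicz property of $C_p(X)$ to the sequence $(\mathcal{U}_n)$ yields finite subfamilies $\mathcal{V}_n \subseteq \mathcal{U}_n$, say $\bigcup \mathcal{V}_n = O_{K_n,n}$ for appropriate $K_n \in \mathbb{N}$, such that $\{n : g(x_n) \ge K_n\} \in \mathcal{I}$ for every $g \in C_p(X)$. The natural witness against this selection is $g = \sum_n K_n f_n$, which is well-defined pointwise by disjointness of the supports and satisfies $g(x_n) = K_n$ for all $n$, so that $\{n : g(x_n) \ge K_n\} = \mathbb{N} \notin \mathcal{I}$, contradicting the properness of $\mathcal{I}$.

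The principal obstacle will be verifying that the candidate $g = \sum_n K_n f_n$ actually lies in $C_p(X)$, i.e., is continuous on $X$. Continuity holds on each $U_n$ and at every point of $X \setminus \overline{\bigcup U_n}$; the delicate issue is continuity at boundary points, which holds automatically when $\{U_n\}$ is locally finite in $X$. This can be arranged whenever $X$ is non-compact (place the $U_n$ inside a non-compact open subset) or whenever $X$ has a non-trivial convergent sequence (for example, metrizable infinite $X$). The genuinely hard case is a compact $X$ admitting no non-trivial convergent sequence (for instance $X = \beta\mathbb{N}$), in which no infinite locally finite cellular family exists. For that case I would instead use a restriction argument: pick a suitable countable $D \subseteq X$ and consider the continuous restriction map $C_p(X) \to C_p(D) \subseteq \mathbb{R}^D$; show the image contains (a continuous copy of) $\mathbb{N}^\mathbb{N}$ or the space $c$ of convergent real sequences, each of which fails to be $\mathcal{I}$-Hurewicz by the diagonal argument above (taking $f(n) = K_n$). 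Since $\mathcal{I}$-Hurewicz is preserved by continuous images, this contradicts the $\mathcal{I}$-Hurewicz property of $C_p(X)$ and completes the proof.
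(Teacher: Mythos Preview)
Your $(\Leftarrow)$ direction is correct and essentially identical to the paper's argument: both observe that for finite $X$ the space $C_p(X)\cong\mathbb{R}^n$ is $\sigma$-compact and second countable, and then invoke Theorem~\ref{thm5}.

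For $(\Rightarrow)$ the paper proceeds differently and much more quickly: from Theorem~\ref{thm5} it gets that every finite power of $C_p(X)$ is $\mathcal{I}$-Hurewicz, and then simply cites \cite[Theorem II.2.10]{arhan92} to conclude $X$ is finite. You instead try to prove directly that $C_p(X)$ fails $\mathcal{I}$-Hurewicz for infinite $X$, and here there is a genuine gap.

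First, your reduction of the ``easy case'' is misstated: the existence of an infinite locally finite cellular family is equivalent to $X$ being \emph{non-pseudocompact}, not non-compact; there are pseudocompact non-compact spaces (e.g.\ the ordinal space $\omega_1$), so ``place the $U_n$ inside a non-compact open subset'' does not by itself give local finiteness. More seriously, your proposed fix for the residual case breaks down. Take $X=\beta\mathbb{N}$ and $D=\mathbb{N}$: the restriction map $C_p(\beta\mathbb{N})\to\mathbb{R}^\mathbb{N}$ has image exactly $\ell^\infty$ with the product (pointwise) topology, and $\ell^\infty=\bigcup_{k}[-k,k]^{\mathbb{N}}$ is $\sigma$-compact, hence Hurewicz, hence $\mathcal{I}$-Hurewicz. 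So the image neither contains a closed copy of $\mathbb{N}^{\mathbb{N}}$ nor fails $\mathcal{I}$-Hurewicz, and the contradiction you seek does not arise from this restriction. (Note also that merely \emph{containing} a non-$\mathcal{I}$-Hurewicz subspace is not enough; you would need a closed copy.) The diagonal witness $(K_n)_n$ you construct need not be bounded, so it need not lie in $\ell^\infty$. In short, the hard case (infinite pseudocompact $X$ with no nontrivial convergent sequences) is exactly where the classical theorem you are trying to reprove does real work, and your sketch does not cover it; the cleanest repair is to do what the paper does and invoke \cite[Theorem II.2.10]{arhan92}.
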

\begin{proof}
Suppose that $C_p(C_p(X))$ is $\mathcal{I}$-H-separable. Then by Theorem~\ref{thm5}, every finite power of $C_p(X)$ is $\mathcal{I}$-Hurewicz. Also by \cite[Theorem II.2.10]{arhan92}, $X$ is finite.

Conversely suppose that $X$ is finite. Then $C_p(X)$ is a $\sigma$-compact metrizable space and so $C_p(X)$ is a separable metrizable space. It follows that $C_p(C_p(X))$ is hereditarily separable and so $d(C_p(C_p(X))) = \omega$. By Theorem~\ref{thm4}, $iw(C_p(X)) = \omega$. Since $C_p(X)$ is $\sigma$-compact, every finite power of $C_p(X)$ is $\mathcal{I}$-Hurewicz. Thus by Theorem~\ref{thm5}, $C_p(C_p(X))$ is $\mathcal{I}$-H-separable.
\end{proof}

\begin{Rem}\rm
\label{rem1}
\hfill
\begin{enumerate}[wide=0pt,label={\upshape(\arabic*)},ref={\theRem(\arabic*)}]
\item\label{rem101} Recall that the Baire space $\mathbb{N}^\mathbb{N}$ can be condensed onto a compact space $X$. It follows that $C_p(X)$ can be densely embedded in $C_p(\mathbb{N}^\mathbb{N})$. Now $C_p(X)$ is separable, i.e. $d(C_p(X)) = \omega$ and so $iw(X) = \omega$ (see Theorem~\ref{thm4}). Since $X$ is compact, every finite power of $X$ is $\mathcal{I}$-Hurewicz and hence by Theorem~\ref{thm5}, $C_p(X)$ is $\mathcal{I}$-H-separable. Let $Y$ be a countable dense subspace of $C_p(X)$. Then $Y$ is $\mathcal{I}$-H-separable. Since $C_p(X)$ is densely embeddable in $C_p(\mathbb{N}^\mathbb{N})$, $Y$ is a countable dense $\mathcal{I}$-H-separable subspace of $C_p(\mathbb{N}^\mathbb{N})$. Also since $C_p(\mathbb{N}^\mathbb{N})$ can be densely embedded in the Tychonoff cube $\mathbb{I}^\mathfrak{c}$, $Y$ is a countable dense $\mathcal{I}$-H-separable subspace of $\mathbb{I}^\mathfrak{c}$. Thus $\mathbb{I}^\mathfrak{c}$ has a countable dense $\mathcal{I}$-H-separable subspace.

\item\label{rem102} Moreover, we have already mentioned in Example~\ref{ex3} that $\mathbb{I}^\mathfrak{c}$ has a countable dense non-$\mathcal{I}$-H-separable subspace.
\end{enumerate}
\end{Rem}

\begin{Th}
\label{thm6}
For a zero-dimensional space $X$ the following assertions are equivalent.
\begin{enumerate}[wide=0pt,label={\upshape(\arabic*)},leftmargin=*]
  \item $C_p(X, \mathbb{Q})$ is $\mathcal{I}$-H-separable.
  \item $C_p(X, \mathbb{Z})$ is $\mathcal{I}$-H-separable.
  \item $C_p(X, 2)$ is $\mathcal{I}$-H-separable.
  \item $iw(X) = \omega$ and every finite power of $X$ is $\mathcal{I}$-Hurewicz.
\end{enumerate}
\end{Th}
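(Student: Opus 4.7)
The plan is to prove each of $(1) \Leftrightarrow (4)$, $(2) \Leftrightarrow (4)$, and $(3) \Leftrightarrow (4)$ by repeating the proof of Theorem~\ref{thm5} with $C_p(X,\mathbb{Q})$, $C_p(X,\mathbb{Z})$, and $C_p(X,2)$ in place of $C_p(X)$ respectively; zero-dimensionality of $X$ is what lets the parallel arguments go through.

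Two ingredients are required in each case. First, an analogue of Theorem~\ref{thm4}: for zero-dimensional $X$,
\[
iw(X) = d(C_p(X,\mathbb{Q})) = d(C_p(X,\mathbb{Z})) = d(C_p(X,2)).
\]
This is a folklore fact: a weaker zero-dimensional Tychonoff topology on $X$ of weight $\kappa$ admits a base of clopen sets whose characteristic functions form a dense subset of cardinality $\kappa$ in each of the three function spaces, and conversely. This lets us translate $iw(X) = \omega$ into separability of any of $C_p(X,2)$, $C_p(X,\mathbb{Z})$, $C_p(X,\mathbb{Q})$. Second, an analogue of Theorem~\ref{thm3}: for zero-dimensional $X$, each of $C_p(X,2)$, $C_p(X,\mathbb{Z})$, $C_p(X,\mathbb{Q})$ is $\iwfs$ iff it is $\iwfsd$ iff every finite power of $X$ is $\mathcal{I}$-Hurewicz.

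The proof of this second analogue follows the template of Theorem~\ref{thm3} with two modifications. In the $\iwfsd \Rightarrow \mathcal{I}$-Hurewicz direction, I would use zero-dimensionality to refine each open cover $\mathcal{U}_n$ of $X^k$ by one consisting of clopen boxes, and take the dense subspaces $Y_n$ to be the characteristic functions $\chi_{\cup \mathcal{U}}$ of unions of finite $\mathcal{U}_n$-small clopen families; these functions are continuous into $\{0,1\} \subseteq \mathbb{Z} \subseteq \mathbb{Q}$ and hence lie in all three function spaces. Density of $Y_n$ in, say, $C_p(X,2)$ is verified by, given a target $g$ with basic neighbourhood determined by a finite $F \subseteq X$ and $\epsilon < 1$, shrinking clopen neighbourhoods of the points of $\{x \in F : g(x) = 1\}$ so that they avoid $\{x \in F : g(x) = 0\}$ and so that each finite product lands inside a single member of $\mathcal{U}_n$. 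In the $\mathcal{I}$-Hurewicz $\Rightarrow \iwfs$ direction, picking $g_x^{(n,k)} \in Y_n$ with values sufficiently close to $f$ on $\{x_1,\ldots,x_k\}$ forces coincidence (for the $\{0,1\}$-case) or local agreement (for the $\mathbb{Z}$- and $\mathbb{Q}$-cases) at those points, and zero-dimensionality provides clopen neighbourhoods on which the agreement persists; the rest of the argument then parallels Theorem~\ref{thm3}.

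With these two analogues available, each of $(j) \Leftrightarrow (4)$ for $j \in \{1,2,3\}$ follows the same structure as Theorem~\ref{thm5}: $(4)$ yields the respective $\iwfs$ via the analogue of Theorem~\ref{thm3} and separability via the analogue of Theorem~\ref{thm4}, and Proposition~\ref{prop1} then delivers $\mathcal{I}$-H-separability; conversely, $\mathcal{I}$-H-separability gives $\iwfsd$ and separability, which together yield $(4)$ through the same two analogues. The main obstacle is the careful density verification for the characteristic-function dense subspaces in the analogue of Theorem~\ref{thm3}, particularly in the $C_p(X,2)$ case where the shrinking argument must simultaneously realize $\mathcal{U}_n$-smallness and match a prescribed $\{0,1\}$-pattern on $F$; once this is settled for $C_p(X,2)$, the $C_p(X,\mathbb{Z})$ and $C_p(X,\mathbb{Q})$ cases are direct adaptations.
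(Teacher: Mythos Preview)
Your approach is correct in outline but takes a genuinely different route from the paper. The paper proves the cycle $(1)\Rightarrow(2)\Rightarrow(3)\Rightarrow(4)\Rightarrow(1)$: for $(1)\Rightarrow(2)$ and $(2)\Rightarrow(3)$ it constructs explicit open continuous surjections $C_p(X,\mathbb{Q})\to C_p(X,\mathbb{Z})$ and $C_p(X,\mathbb{Z})\to C_p(X,2)$ (composition with integer-valued, respectively $\{0,1\}$-valued, step functions on the target) and invokes Lemma~\ref{lemma3}; the substantial work is concentrated in $(3)\Rightarrow(4)$, where the $\mathcal{I}$-Hurewicz property is extracted directly from $\mathcal{I}$-H-separability of $C_p(X,2)$ by the $\mathcal{U}$-small-family argument; finally $(4)\Rightarrow(1)$ follows at once from Theorem~\ref{thm5} and the density of $C_p(X,\mathbb{Q})$ in $C_p(X)$ via Lemma~\ref{lemma103}. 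Thus the paper never needs separate $\iwfs$ analogues of Theorem~\ref{thm3} for the three function spaces, nor the full density-character equality you cite; it only uses $d(C_p(X,2))=\omega\Rightarrow iw(X)=\omega$, proved by hand in the $(3)\Rightarrow(4)$ step. Your scheme of proving three parallel equivalences $(j)\Leftrightarrow(4)$ works but is more laborious, and note a small slip: characteristic functions of clopen sets are \emph{not} dense in $C_p(X,\mathbb{Z})$ or $C_p(X,\mathbb{Q})$ (they take only values $0,1$); you need clopen-partition step functions instead, though this does not affect the cardinality count. The paper's route buys economy by doing the hard covering argument once, at the $C_p(X,2)$ level, and transporting everything else via open maps and already-proved results.
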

\begin{proof}
$(1) \Rightarrow (2)$. For each $n \in \mathbb{Z}$ choose an irrational point $x_n \in (n, n+1)$. Let $\psi \in \mathbb{Z}^\mathbb{Q}$ be given by $\psi(x) = n$ if $x\in (x_{n-1}, x_n)$. Now define a mapping $\Phi : C_p(X, \mathbb{Q}) \to C_p(X, \mathbb{Z})$ by $\Phi(f) = \psi \circ f$ for all $f \in C_p(X, \mathbb{Q})$. Observe that $\Phi$ is an open continuous mapping from $C_p(X, \mathbb{Q})$ onto $C_p(X, \mathbb{Z})$. Then by Lemma~\ref{lemma3}, $C_p(X, \mathbb{Z})$ is $\mathcal{I}$-H-separable.

$(2) \Rightarrow (3)$. Let $\phi \in 2^\mathbb{Z}$ be given by
\[\phi(n) = \begin{cases}
             0, & \mbox{if } n< 0 \\
             1, & \mbox{otherwise}.
            \end{cases}\]
Define a mapping $\Psi : C_p(X, \mathbb{Z}) \to C_p(X, 2)$ by $\Psi(f) = \phi \circ f$ for all $f \in C_p(X, \mathbb{Z})$. Clearly $\Psi$ is an open continuous mapping from $C_p(X, \mathbb{Z})$ onto $C_p(X, 2)$. By Lemma~\ref{lemma3}, $C_p(X, 2)$ is $\mathcal{I}$-H-separable.

$(3) \Rightarrow (4)$. Let $D$ be a countable dense subspace of $C_p(X, 2)$. Then $\{f^{-1}(0), f^{-1}(1) : f\in D\}$ forms a base for a zero-dimensional second countable topology on $X$ which is contained in the original topology of $X$. It follows that $iw(X) = \omega$.

We now show that every finite power of $X$ is $\mathcal{I}$-Hurewicz. Let $k\in \mathbb{N}$ be fixed. Let $(\mathcal{U}_n)$ be a sequence of open covers of $X^k$. For each $n$ let $\mathfrak{F}_n$ be the collection of all finite $\mathcal{U}_n$-small families of open sets of $X$. For each $\mathcal{U} \in \mathfrak{F}_n$ define $Z_n(\mathcal{U}) = \{f\in C_p(X, 2) : f(X\setminus \cup \mathcal{U}) = \{0\}\}$. For each $n$ choose $Y_n = \cup \{Z_n(\mathcal{U}) : \mathcal{U}\in \mathfrak{F}_n\}$. Observe that for each $n$ $Y_n$ is dense in $C_p(X, 2)$. Since $C_p(X, 2)$ is $\mathcal{I}$-H-separable, there exists a sequence $(F_n)$ such that for each $n$ $F_n$ is a finite subset of $Y_n$ and for every nonempty open set $U$ of $C_p(X, 2)$, $\{n\in \mathbb{N} : U\cap F_n = \emptyset\} \in \mathcal{I}$. For each $n$ and each $f\in F_n$ choose a $\mathcal{U}_f^{(n)} \in \mathfrak{F}_n$ such that $f\in Z_n(\mathcal{U}_f^{(n)})$. For every $V_1, V_2, \dotsc, V_k \in \mathcal{U}_f^{(n)}$ choose a $U_f^{(n)}(V_1, V_2, \dotsc, V_k) \in \mathcal{U}_n$ such that $V_1 \times V_2\times \dotsc V_k \subseteq U_f^{(n)}(V_1, V_2, \dotsc, V_k)$. For each $n$ $\mathcal{V}_n = \{U_f^{(n)}(V_1, V_2, \dotsc, V_k) : V_1, V_2, \dotsc, V_k \in \mathcal{U}_f^{(n)}, f\in F_n\}$ is a finite subset of $\mathcal{U}_n$. Observe that the sequence $(\mathcal{V}_n)$ witnesses for $(\mathcal{U}_n)$ that $X^k$ is $\mathcal{I}$-Hurewicz. Let $x = (x_1, x_2, \dotsc, x_k) \in X^k$. Pick the open set $U = \{f\in C_p(X, 2) : f(x_i) = 1 \text{ for all } 1\leq i \leq k\}$ of $C_p(X, 2)$. Clearly $U$ is nonempty. Then $\{n\in \mathbb{N} : U\cap F_n = \emptyset\} \in \mathcal{I}$. We claim that $\{n\in \mathbb{N} : x\notin \cup \mathcal{V}_n\} \subseteq \{n\in \mathbb{N} : U\cap F_n = \emptyset\}$. Pick a $m \in \{n\in \mathbb{N} : x\notin \cup \mathcal{V}_n\}$. Then $x \notin \cup \mathcal{V}_m$. If possible suppose that $U\cap F_m \neq \emptyset$. Pick a $f \in C_p(X, 2)$ such that $f\in U\cap F_m$. Since $f\in Z_m(\mathcal{U}_f^{(m)})$, $f(X\setminus \cup \mathcal{U}_f^{(m)}) = \{0\}$. It follows that $f(x_i) = 1$ for all $1\leq i \leq k$ and $f(y) = 0$ for all $y\in X\setminus \cup \mathcal{U}_f^{(m)}$. For each $1\leq i \leq k$ choose a $V_i \in \mathcal{U}_f^{(m)}$ such that $x_i \in V_i$. So $x \in V_1\times V_2 \times \dotsc V_k$. Then $x\in U_f^{(m)}(V_1, V_2, \dotsc, V_k)$ as $V_1 \times V_2\times \dotsc V_k \subseteq U_f^{(m)}(V_1, V_2, \dotsc, V_k)$ and hence $x\in \cup \mathcal{V}_m$. Which is absurd. Consequently $U\cap F_m \neq \emptyset$, i.e. $m\in \{n\in \mathbb{N} : U\cap F_n = \emptyset\}$ and so $\{n\in \mathbb{N} : x\notin \cup \mathcal{V}_n\} \subseteq \{n\in \mathbb{N} : U\cap F_n = \emptyset\}$, i.e. $\{n\in \mathbb{N} : x\notin \cup \mathcal{V}_n\} \in \mathcal{I}$. Thus $X^k$ is $\mathcal{I}$-Hurewicz.

$(4) \Rightarrow (1)$. By Theorem~\ref{thm5}, $C_p(X)$ is $\mathcal{I}$-H-separable. Since $X$ is zero-dimensional, $C_p(X, \mathbb{Q})$ is dense in $C_p(X)$. By Lemma~\ref{lemma103}, $C_p(X, \mathbb{Q})$ is $\mathcal{I}$-H-separable.
\end{proof}

\begin{Cor}
\label{cor7}
Let $(X, \tau)$ be condensed onto a separable metrizable space $(X, \tau^\prime)$. If every finite power of $(X, \tau^\prime)$ is $\mathcal{I}$-Hurewicz, then there exists a dense $\mathcal{I}$-H-separable subspace of $C_p(X, \tau^\prime)$.
\end{Cor}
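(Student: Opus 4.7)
The plan is to reduce the corollary to Theorem~\ref{thm5} applied to the weaker topology $(X,\tau')$, and then, if needed, transport the conclusion into the ambient function space $C_p((X,\tau))$ via the canonical dense inclusion coming from the condensation.

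First I would verify the hypotheses of Theorem~\ref{thm5} for $(X,\tau')$. Since $(X,\tau')$ is separable metrizable, it is second countable, so $w((X,\tau')) = \omega$ and in particular $iw((X,\tau')) = \omega$. Combined with the standing assumption that every finite power of $(X,\tau')$ is $\mathcal{I}$-Hurewicz, Theorem~\ref{thm5} yields that $C_p((X,\tau'))$ is $\mathcal{I}$-H-separable. This already validates the statement as literally written, since $C_p((X,\tau'))$ is a dense (indeed, the whole) $\mathcal{I}$-H-separable subspace of itself.

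The more informative reading, parallel to the construction in Remark~\ref{rem101}, is to exhibit such a subspace inside the \emph{larger} space $C_p((X,\tau))$. For this I would argue as follows. Since $(X,\tau)$ is condensed onto $(X,\tau')$ one has $\tau' \subseteq \tau$, so every $\tau'$-continuous real-valued function is also $\tau$-continuous, giving $C((X,\tau')) \subseteq C((X,\tau))$; both sides carry the pointwise topology inherited from $\mathbb{R}^X$, making the inclusion $C_p((X,\tau')) \hookrightarrow C_p((X,\tau))$ a topological embedding. Density is a routine Tychonoff interpolation: given $f \in C((X,\tau))$, a finite $F \subseteq X$, and $\epsilon > 0$, the fact that $(X,\tau')$ is Tychonoff (indeed metrizable) furnishes $g \in C((X,\tau'))$ with $g|_F = f|_F$, so that $g \in B(f,F,\epsilon)$. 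Combining this with the previous paragraph, $C_p((X,\tau'))$ embeds as a dense $\mathcal{I}$-H-separable subspace of $C_p((X,\tau))$.

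I do not anticipate a real obstacle: Theorems~\ref{thm4} and~\ref{thm5} package the structural content, and the finite-interpolation step needed for density is a standard consequence of the Tychonoff property of $(X,\tau')$ applied to a finite set.
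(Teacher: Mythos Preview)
Your proof is correct and follows essentially the same route as the paper: show that $C_p((X,\tau'))$ is $\mathcal{I}$-H-separable (you cite the packaged Theorem~\ref{thm5} directly, while the paper uses its ingredients Theorem~\ref{thm3} and Proposition~\ref{prop1} together with hereditary separability of $C_p$ over a separable metrizable base) and then observe that it sits densely inside $C_p((X,\tau))$. You also rightly flag the apparent typo in the statement---the intended ambient space is $C_p((X,\tau))$, as the paper's own proof confirms.
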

\begin{proof}
We can think of $C_p(X, \tau^\prime)$ as subspace of $C_p(X, \tau)$. Clearly $C_p(X, \tau^\prime)$ is dense in $C_p(X, \tau)$. By Theorem~\ref{thm3}, $C_p(X, \tau^\prime)$ is $\iwfsd$. Since $(X, \tau^\prime)$ is separable metrizable, $C_p(X, \tau^\prime)$ is hereditarily separable. It follows that $C_p(X, \tau^\prime)$ is $\mathcal{I}$-H-separable (see Proposition~\ref{prop1}).
\end{proof}

We now discuss the importance of separability in the preceding result.
\begin{Ex}
\label{ex4}
There is a metrizable space $X$ such that $C_p(X)$ is not $\mathcal{I}$-H-separable but $C_p(X)$ has a dense $\mathcal{I}$-H-separable subspace.
\end{Ex}
\begin{proof}
Consider the discrete space $X$ with $|X| = \mathfrak{c}$. Now $C_p(X) = \mathbb{R}^\mathfrak{c}$. We can interpret $\mathbb{R}^\mathfrak{c}$ as $\mathbb{R}^{2^\omega}$. If $Y = C_p(2^\omega)$, then $Y$ is dense in $\mathbb{R}^{2^\omega}$. Now $iw(2^\omega) = \omega$ and $2^\omega$ is compact, i.e. every finite power of $2^\omega$ is $\mathcal{I}$-Hurewicz. By Theorem~\ref{thm5}, $Y$ is $\mathcal{I}$-H-separable. Thus $Y$ is a dense $\mathcal{I}$-H-separable subspace of $C_p(X) = \mathbb{R}^\mathfrak{c}$. Again by Theorem~\ref{thm5}, $C_p(X) = \mathbb{R}^\mathfrak{c}$ is not $\mathcal{I}$-H-separable as $X$ is not $\mathcal{I}$-Hurewicz.
\end{proof}

The following question can naturally be raised.
\begin{Prob}
\label{prob3}
Suppose that $C_p(X)$ has a dense $\mathcal{I}$-H-separable subspace. Can $X$ be condensed onto a second countable space every finite power of which is $\mathcal{I}$-Hurewicz?
\end{Prob}


\begin{thebibliography}{50}
\bibitem{arhan78} A.V. Arhangel'ski\u{i}, The structure and classification of topological spaces and cardinal invariants (Russian), Uspekhi Mat. Nauk ,33 (1978), 29--84.

\bibitem{arhan92} A.V. Arhangelski\u{i}, Topological Function Spaces, Kluwer Academic Publishers, 1992.

\bibitem{babin09} L. Babinkostova, On some questions about selective separability, Math. Log. Q., 55(3) (2009), 260--262.

\bibitem{barman11} D. Barman, A. Dow, Selective separability and SS$^+$, Topology Proc., 37 (2011), 181--204.

\bibitem{bartos06} T. Bartoszy\'{n}ski and B. Tsaban, Hereditary topological diagonalizations and the Menger-Hurewicz Conjectures, Proc. Amer. Math. Soc., 134 (2006), 605--615.

\bibitem{bella09} A. Bella, M. Bonanzinga, M. Matveev, Variations of selective separability, Topology Appl., 156 (2009), 1241--1252.

\bibitem{bella08} A. Bella, M. Bonanzinga, M.V. Matveev, V.V. Tkachuk, Selective separability: General facts and behavior in countable spaces, Topology Proc., 32 (2008), 15--32.

\bibitem{blass99} A. Blass, H. Mildenberger, On the cofinality of ultrapowers, J. Symb. Log., 64(2) (1999), 727--736.

\bibitem{PD} P. Das, Certain types of open covers and selection principles using ideals, Houston J. Math., 39(2) (2013), 637--650.

\bibitem{PD4} P. Das, U. Samanta, D. Chandra, Some observations on Hurewicz and $\mathcal{I}$-Hurewicz property, Topology Appl., 258 (2019), 202--214.

\bibitem{Engelking} R. Engelking, General Topology, Heldermann Verlag, Berlin, 1989.

\bibitem{juhas89} I. Juh\'{a}sz, S. Shelah, $\pi(X) = \delta(X)$ for compact $X$, Topology Appl., 32 (1989), 289--294.

\bibitem{coc2} W. Just, A.W. Miller, M. Scheepers, P.J. Szeptycki, The combinatorics of open covers (II), Topology Appl., 73 (1996), 241--266.

\bibitem{kkjv} K. Kunen, J.E. Vaughan, eds., Handbook of Set-Theoretic Topology, Elsevier, 2014.

\bibitem{noble74} N. Noble, The density character in function spaces, Proc. Amer. Math. Soc., 42(1), (1974), 515--531.

\bibitem{sapir75} B.\'{E}. \v{S}apirovski\u{i}, $\pi$-character and $\pi$-weight in bicompacta (Russian), Dokl. Akad. Nauk SSSR, 223(4) (1975), 799--802. English translation Soviet Math. Dokl., 16(4) (1976), 999--1004.

\bibitem{coc1} M. Scheepers, Combinatorics of open covers I: Ramsey theory, Topology Appl., 69 (1996), 31--62.

\bibitem{coc6} M. Scheepers, Combinatorics of open covers VI: Selectors for sequences of dense sets, Quaest. Math., 22(1) (1999), 109--130.
\end{thebibliography}
\end{document}